\newcolumntype{C}[1]{>{\centering\arraybackslash}p{#1}}
\newcommand{\Q}{{\mathbb Q}}
\newcommand{\Z}{{\mathbb Z}}
\newcommand{\Oo}{\mathcal{O}}
\newcommand{\B}{\mathcal{B}}
\newcommand{\mmul}{\mbox{mult}}
\newcommand{\tl}[1]{\tilde{#1}}
\newcommand{\st}{^{\ast}}
\newcommand{\ts}{_{\ast}}
\newcommand{\vc}[1]{\vcenter{\hbox{#1}}}
\newtheorem{thm}{Theorem}[section]
\newtheorem{pro}[thm]{Proposition}
\newtheorem{cor}[thm]{Corollary}
\newtheorem{lem}[thm]{Lemma}
\newtheorem{que}[thm]{Question}
\theoremstyle{definition}
\newtheorem{rk}[thm]{Remark}
\newtheorem{cov}[thm]{Convention}
\newtheorem{defn}[thm]{Definition}
\newtheorem*{cl}{Claim}
\begin{document}
\title{Chern numbers of terminal threefolds}
\author{Paolo Cascini}
\address{Department of Mathematics\\
Imperial College London\\
180 Queen's Gate\\
London SW7 2AZ, UK}
\email{p.cascini@imperial.ac.uk}
\author{Hsin-Ku Chen}
\address{ School of Mathematics\\
Korea Institute for Advanced Study\\
 85 Hoegiro, Dongdaemungu \\
 Seoul 02455, Republic of Korea}
\email{hkchen@kias.re.kr}

\begin{abstract} 
Let $X$ be a smooth complex projective threefold. We show that if $X_i\dashrightarrow X_{i+1}$ is a flip which appears in the $K_X$-MMP, then $c_1(X_i)^3-c_1(X_{i+1})^3$ is bounded by a constant depending only on $b_2(X)$.
\end{abstract}
\maketitle

\section{Introduction}

The aim of this paper is to discuss the following question:

\begin{que}[Kotschich \cite{k1}]
	Let $X$ be a smooth complex projective threefold. Are Chern numbers of $X$ bounded by a number that depends only on the topology of the manifold underlying $X$?
\end{que}
Note that this question is known to have a negative answer for non-K\"ahler complex threefolds \cite{lb} and  for complex projective varieties of dimension greater than three \cite{st1}. On the other hand, the question has a positive answer in the case of K\"ahler varieties underlying a spin manifold \cite{st3}.

\medskip

In the case of a smooth projective threefold $X$, the only Chern numbers are $c_1^3(X)$, $c_1c_2(X)$ and $c_3(X)$. The last one coincides with the topological Euler characteristic of $X$ and, in particular, it is a topological invariant. On the other hand, by Hirzebruch-Riemann-Roch theorem, we have that $|c_1c_2(X)|$ coincides with $|24\chi(\Oo_X)|$ and it is therefore bounded by an integer depending only on the sum of the Betti numbers of $X$. Therefore, it remains to bound $c_1^3(X)$ or, in other words, $K_X^3$.

Thanks to the Minimal Model Program, we know that a smooth projective threefold is birational to either a minimal model, i.e. a variety $Y$ such that $K_Y$ is nef or to a variety $Y$ which admits a Mori fibre space, i.e. a fibration $\eta\colon Y\to Z$ such that $\dim Z<3$, the general fibre of $\eta$ is Fano and the relative Picard number $\rho(Y/Z)$ is one. We first want to bound $K_Y^3$. To this end, if $Y$ is a minimal model then $K_Y^3$ concides with the volume of $Y$ and by \cite{ct} this number is bounded by the Betti numbers of $X$. In the second case, instead, it follows from \cite{st2} that if the cubic form $F_Y$ has non-zero discriminant $\Delta_{F_Y}$ (cf. \S \ref{c_pre}), then $K_Y^3$ is bounded by a number that depends only on $F_Y$ and the first Pontryagin class $p_1(Y)$ of $Y$ (cf. \S \ref{sp}).

We then need to show that these bounds hold also on $X$. By \cite{cz}, we have that the number $k$ of steps of an MMP 
 \[ X=X_0\dashrightarrow X_1\dashrightarrow ...\dashrightarrow X_k=Y\] 
 starting from $X$ and the singularities of the output $Y$ of this MMP are both bounded by a number which depends only on the topology of the manifold underlying $X$. Thus, the primary challenge lies in bounding the variation of the topological invariants of the underlying varieties and the Chern number $K_{X_i}^3$ at each step $X_i\dashrightarrow X_{i+1}$ of this MMP. In the case of divisorial contractions, this problem was solved in \cite{ct}, provided that the discriminant of the associated cubic form $F_{X_i}$ is non-zero. In this paper, we solve the case of flips:

\begin{thm}\label{thm1}
	Let $X$ be a smooth complex projective threefold and  let \[ X=X_0\dashrightarrow X_1\dashrightarrow ...\dashrightarrow X_k=Y\] be a $K_X$-MMP. 
	
	Then $|K_{X_i}^3-K_{X_{i+1}}^3|$ is bounded by an integer which depends only on $b_2(X)$.
\end{thm}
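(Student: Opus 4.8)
The plan is to handle the two types of steps separately. For a divisorial contraction $X_i\dashrightarrow X_{i+1}$ the required bound is supplied by \cite{ct}, so the heart of the matter is a flip, which is what I describe below. I would first reduce the computation of $K_{X_i}^3-K_{X_{i+1}}^3$ to a purely local quantity supported at the flipping points. Fix a common log resolution $g\colon\tilde X\to X_i$ and $h\colon\tilde X\to X_{i+1}$ of the flip and compare canonical divisors: writing $K_{\tilde X}=g\st K_{X_i}+\sum_E a_E(X_i)E$ and $K_{\tilde X}=h\st K_{X_{i+1}}+\sum_E a_E(X_{i+1})E$ and subtracting yields
\[ g\st K_{X_i}-h\st K_{X_{i+1}}=\sum_E\big(a_E(X_{i+1})-a_E(X_i)\big)E=:D. \]
Since a flip strictly increases discrepancies over the flipped locus, $D$ is effective, and because the flip is an isomorphism in codimension one every prime divisor in $\mathrm{Supp}(D)$ is exceptional for both $g$ and $h$; in particular $h_*D=0$.

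Using $K_{X_i}^3=(g\st K_{X_i})^3$ and $K_{X_{i+1}}^3=(h\st K_{X_{i+1}})^3$, I would then expand $(g\st K_{X_i})^3=(h\st K_{X_{i+1}}+D)^3$. The projection formula gives $(h\st K_{X_{i+1}})^2\cdot D=K_{X_{i+1}}^2\cdot h_*D=0$, so the only surviving terms are
\[ K_{X_i}^3-K_{X_{i+1}}^3=3\,(h\st K_{X_{i+1}})\cdot D^2+D^3=3\,K_{X_{i+1}}\cdot h_*(D^2)+D^3. \]
Both summands are local at the finitely many points of $W$ lying below the flipping curve: $h_*(D^2)$ is a $1$-cycle supported on the flipped curve $C^+$, and $D^3$ depends only on an analytic neighbourhood of the exceptional locus. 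Thus it suffices to bound these two expressions.

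I would estimate them through three local quantities: the coefficients of $D$, namely the discrepancy differences $a_E(X_{i+1})-a_E(X_i)$; the number of prime divisors appearing in a fixed resolution of the flip; and the intersection numbers $K_{X_{i+1}}\cdot C^+$ and $E_j\cdot E_k\cdot E_l$ among the exceptional divisors. By the negativity lemma the relevant intersection form is negative, which converts bounds on discrepancies and on the number of exceptional divisors into bounds on $D^3$ and on $h_*(D^2)$. All three inputs are controlled once one bounds the index of the terminal singularities lying on the flipping and flipped curves and the number of such points, information which I would extract from the classification of terminal threefold flips.

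The hard part will be to show that these local invariants are themselves bounded in terms of $b_2(X)$. The results of \cite{cz} bound the number of steps of the MMP and the singularities of its output, and the main technical point is to propagate this control to each intermediate model $X_i$ and to the local data of each individual flip, equivalently to rule out flips of arbitrarily large index occurring in an MMP starting from a smooth threefold with fixed $b_2(X)$. Once the indices and the number of singular points on the flipping curves are bounded, the estimates above give $|K_{X_i}^3-K_{X_{i+1}}^3|\le C(b_2(X))$, as desired.
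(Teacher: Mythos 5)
Your reduction of $K_{X_i}^3-K_{X_{i+1}}^3$ to $3\,h\st K_{X_{i+1}}\cdot D^2+D^3$ on a common resolution is correct, but the proposal stops exactly where the difficulty begins. Bounding the Cartier indices and the number of singular points on the flipping curve does follow from the structure of the MMP (the paper does this via the depth function of Chen--Hacon, Lemmas \ref{depr} and \ref{bka}), but this does not bound the local analytic type of the flip: even for fixed index, terminal flips come in infinite families, and neither the discrepancy differences $a_E(X_{i+1})-a_E(X_i)$, nor the number of exceptional divisors on a resolution, nor the triple intersection numbers $E_j\cdot E_k\cdot E_l$ are controlled by the index and the number of singular points alone. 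In particular the assertion that the negativity lemma ``converts bounds on discrepancies and on the number of exceptional divisors into bounds on $D^3$'' is not an argument: negativity gives signs, not magnitudes, and bounding a cubic expression requires bounding the actual intersection numbers. Likewise $K_{X_{i+1}}\cdot C^+$ for a flipped curve, which you list as an input, is itself a nontrivial theorem (Lemma \ref{kcb}), proved by induction on depth through the Chen--Hacon factorisation of the flip into a $w$-morphism, lower-depth flips and flops, and a divisorial contraction, combined with a singular Riemann--Roch computation of $E^3$ for divisorial contractions to curves (Lemma \ref{e3b}) and the Du Val section machinery of Lemmas \ref{del} and \ref{ecb}. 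All of that is what your appeal to ``the classification of terminal threefold flips'' would have to replace, and the classification by itself does not supply these bounds.

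A secondary point: the bound for a divisorial contraction to a curve is \emph{not} supplied by \cite{ct} in terms of $b_2(X)$ alone --- the blow-up of a degree-$d$ rational curve in $\mathbb P^3$ has $b_2=2$ but unbounded $K_{X_0}^3-K_{X_1}^3$. As the abstract and introduction make clear, the theorem is a statement about the flip steps only, and \cite{ct} controls divisorial contractions only in terms of the cubic form $F_{X_i}$, not of $b_2(X)$.
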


\medskip

The remaining question is to understand how the cubic form varies after each flip. Indeed if $X_i\rightarrow X_{i+1}$ is a divisorial contraction to a curve then  $K_{X_i}^3-K_{X_{i+1}}^3$ 
depends on the cubic form of $F_{X_i}$ of $X_i$ and not only on its Betti numbers ( e.g., consider the blowup of a rational curve of degree $d$ in $\mathbb P^3$). More precisely, 
if  $X_i\dashrightarrow X_{i+1}$ is a flip then 
we need to show:
\begin{enumerate}[(1)]
\item the equivalence class of $F_{X_{i+1}}$ belongs to a finite set which depends only on $F_{X_i}$; and 
\item if $\Delta_{F_{X_i}}\neq 0$ then  $\Delta_{F_{X_{i+1}}}\neq 0$.
\end{enumerate}
We have a partial solution about the finiteness of cubic forms.

\begin{thm}\label{thm2}
	Let $X$ be a smooth complex projective threefold  and let \[ X=X_0\dashrightarrow X_1\dashrightarrow ...\dashrightarrow X_k=Y\] be a $K_X$-MMP. Assume that $X_i\dashrightarrow X_{i+1}$ is a flip for some $i=1,\dots,k-1$, and let $\phi_i\colon X_i\rightarrow W_i$ be the corresponding flipping contraction.
	
	Then the equivalence class of $F_{X_{i+1}}$  belongs to a finite set which depends only on $b_2(X)$, $F_{X_i}$ and $\phi_i\st H^2(W_i,\Z)\subset H^2(X_i,\Z)$
\end{thm}

We would like to thank P. Wilson for pointing out a mistake in the first version of this paper. We are grateful to the referees for carefully reading the paper and for several useful comments. The first author would like to thank the National Center for Theoretical Sciences in Taipei and  J. A. Chen for their generous hospitality, where some of the work for this paper was completed. 
Some of the work was done when the second author was visiting Imperial College London. The second author thanks Imperial College London for their hospitality. 
The first author is partially supported by an EPSRC grant. 
The second author is supported by KIAS individual Grant MG088901.

\section{Preliminaries}

\subsection{Terminal threefolds}
We work over the complex numbers.

\subsubsection{Cubic Forms}\label{c_pre}
Let $R$ be a commutative ring. A \emph{cubic form} in $R$ is an homogenous polynomial $F\in R[x_0,\dots,x_n]$  of degree three. 
We denote by $\Delta_{F}$ its discriminant (e.g. see \cite[\S2.3]{ct}). Recall that if $R$ is an algebraically closed field of characteristic zero then $(F=0)\subset \mathbb P^n$ is singular if and only if $\Delta_F=0$. 
Following \cite[Definition 2.9]{ct}, given a cubic form $G\in  R[x_1,\dots,x_n]$ and  elements $b=(b_1,\dots,b_n)\in R^n$ and $a\in R$, we denote by $(a,b,G)$ the cubic form 
\[
F(x_0,\dots,x_n) \coloneqq ax_0^3+x_0^2\cdot \sum_{i=1}^n b_ix_i+ G(x_1,\dots, x_n)\in  R[x_0,\dots,x_n]
\]
 We say that two cubic forms $F_1,F_2\in R[x_0,\dots,x_n]$ are \emph{equivalent} if there exists $T\in {\rm SL}(n+1,R)$ such that $F_1(T\cdot x)=F_2(x)$. If the cubic form  $F\in R[x_0,\dots,x_n]$ is equivalent to $(a,b,G)$ for some cubic form $G\in R[x_1,\dots,x_n]$ and elements 
 $b=(b_1,\dots,b_n)\in R^n$ and $a\in R$,
 then, for simplicity, we will just denote it by
 \[
 F\sim (a,b,G).
 \]

	Let $X$ be a threefold. Given $\sigma_1$, $\sigma_2\in H^2(X,\Z)$, we denote by  $\sigma_1\cdot \sigma_2\in H_2(X,\Z)$  the image of $\sigma_1\cup\sigma_2$ under the morphism $H^4(X,\Z)\rightarrow H_2(X,\Z)$ defined by taking the cap product with the fundamental class of $X$. 
	We denote by $F_X$ the cubic form defined by the cup product on $H^2(X,\mathbb Z)$.

\subsubsection{The first Pontryagin class }\label{sp}

Let $X$ be a terminal threefold. As in \cite[\S3.1]{st2}, we define the first Pontryagin class $p_1(X)\in {\rm Hom}(H^2(X,\mathbb Q),\mathbb Q)$ of $X$, as 
\[
p_1(X)=c_1(X)^2-2c_2(X).
\]
Note that if $X$ is smooth, then $p_1(X)$ only depends on the topology of the underlying manifold of $X$ (e.g. see \cite[\S 1.1]{ov}).

\subsubsection{Singularities of terminal threefolds} 
	Three dimensional terminal singularities were classified by Reid \cite{r} and Mori \cite{m}. A three-dimensional Gorenstein terminal singularity is an isolated compound Du Val singularity, i.e. an isolated hypersurface singularity defined by the equation 
	\[
	f(x,y,z)+ug(x,y,z,u)=0
	\] 
	such that $(f(x,y,z)=0)\subset \mathbb A^3$ defines a two-dimensional Du Val singularity. A three-dimensional non-Gorenstein terminal singularity belongs to one of the following six classes: $cA/r$, $cAx/2$, $cAx/4$, $cD/2$, $cD/3$ and $cE/2$ (see \cite[(6.1)]{r2} for the explicit description of each class).
	
	Let $(P\in X)$ be a germ of a three-dimensional terminal point. The \emph{Cartier index} of $X$ at $P$ is the smallest positive integer $r$ such that $rK_X$ is Cartier near $P$. In particular, if $D$ is any Weil $\mathbb Q$-Cartier divisor on $X$ then $rD$ is also Cartier near $P$. 
	It is known that one can deform this singularity to get a family of terminal singularities, such that a general member of this family has only cyclic-quotient singularities (cf. \cite[(6.4)]{r2}). Assume that $(P\in X)$ deforms to $k$ cyclic quotient points $P_1$, ..., $P_k$. Since $P_i$ is a three-dimensional terminal cyclic-quotient singularity, it is of  type $\frac{1}{r_i}(1,-1,b_i)$ for some $0<b_i\leq \frac{r_i}{2}$.
\begin{defn}
	Notation as above.
	\begin{enumerate}[(1)]
	\item The data $\B(P\in X)\coloneqq \{(r_i,b_i)\}_{i=1}^k$ is called the \emph{basket data} of the singularity  $(P\in X)$. We denote  $\B(X)\coloneqq \bigcup_{P} \B(P\in X)$.
	\item The number $aw(P\in X)\coloneqq k$ is called the \emph{axial weight} of the singularity.
	\item We denote $\Xi(P\in X)\coloneqq r_1+...+r_k$.
	\end{enumerate}	   
\end{defn}
We refer to \cite[Remark 2.1]{ch} for explicit values of these invariants.
\begin{rk}
	In \cite{km} Koll\'{a}r and Mori define another invariant called the \emph{axial multiplicity} of $(P\in X)$. If $P\in X$ is not a $cAx/4$ singularity, than the axial multiplicity coincides with the axial weight. If $P\in X$ is a $cAx/4$ point then the axial multiplicity is equal to $aw(P\in X)+1$. 
\end{rk} 
\begin{defn}
	Let $Y\rightarrow X$ be a divisorial contraction between terminal varieties, which contracts a divisor $E$ to a point $P\in X$. We say that $Y\rightarrow X$ is a \emph{$w$-morphism} if $a(X,E)=\frac{1}{r_P}$, where $r_P$ is the Cartier index of $K_X$ near $P$ and $a(X,E)$ is the discrepancy of $X$ with respect to $E$.
\end{defn}
\begin{defn}
	The \emph{depth} of a terminal singularity $P\in X$, denoted by $dep(P\in X)$, is the minimal length of a sequence \[ X_m\rightarrow X_{m-1}\rightarrow \cdots\rightarrow X_1\rightarrow X_0=X,\] such that $X_m$ is Gorenstein and $X_i\rightarrow X_{i-1}$ is a $w$-morphism for all $1\leq i\leq m$. 
	Given a terminal threefold $X$, we define \[dep(X)\coloneqq \sum_P dep(P\in X).\]
		Note that the existence of such a sequence follows from \cite[Theorem 1.2]{h}.
\end{defn}
\begin{lem}\label{bka}
	Let $X$ be a terminal threefold and $P\in X$ be a singular point. 
	
	Then the following holds.
	\begin{enumerate}[(1)]
	\item The basket data of $P\in X$ belongs to a finite set which depends only on $dep(P\in X)$.
	\item The Cartier index of $X$ at $P$ and the axial multiplicity of $P\in X$ are both bounded by $2dep(P\in X)$.
	\end{enumerate}
\end{lem}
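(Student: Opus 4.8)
The plan is to establish (2) first and then deduce (1) from it. For the deduction, recall that the basket data is $\B(P\in X)=\{(r_i,b_i)\}_{i=1}^{k}$ with $k=aw(P\in X)$ and $0<b_i\le r_i/2$, and that each $r_i$ divides the Cartier index $r_P$; in fact $r_P=\operatorname{lcm}\{r_1,\dots,r_k\}$, which can be read off from the explicit description in \cite[Remark 2.1]{ch}. Hence a bound $r_P\le 2\,dep(P\in X)$ forces $r_i\le 2\,dep(P\in X)$ and $b_i\le dep(P\in X)$, while a bound $k=aw(P\in X)\le 2\,dep(P\in X)$ controls the number of basket points. Together these confine $\B(P\in X)$ to a finite list of tuples depending only on $dep(P\in X)$, which is (1).

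To prove (2) I would argue by induction on $d\coloneqq dep(P\in X)$. The base case is $P\in X$ Gorenstein, where $r_P=1$, the general deformation has only smooth points and the basket is empty, so there is nothing to bound (the numerical inequality is to be read for non-Gorenstein $P$, where $d\ge 1$). For the inductive step, fix a $w$-morphism sequence of minimal length $d$ realizing the depth and let $f\colon Y\to X$ be its first step, contracting a divisor $E$ to $P$. Let $Q_1,\dots,Q_s$ be the singular points of $Y$ lying over $P$. The tail of the chosen sequence has length $d-1$ and renders $Y$ Gorenstein over $P$; since a $w$-morphism contracts a divisor to a single point, the tail steps partition among the $Q_j$, so $\sum_{j} dep(Q_j\in Y)\le d-1$ and in particular $dep(Q_j\in Y)\le d-1$ for each $j$. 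The induction hypothesis then supplies $r_{Q_j}\le 2\,dep(Q_j\in Y)$ and $aw(Q_j\in Y)\le 2\,dep(Q_j\in Y)$.

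The core of the argument is a local transformation rule comparing the invariants of $P\in X$ with those of the points $Q_j\in Y$ produced by $f$. Here Reid's and Mori's classification of three-dimensional terminal singularities, together with the realization of $w$-morphisms as weighted blow-ups in \cite{h} and the explicit invariant computations in \cite[Remark 2.1]{ch}, let one proceed case by case through the six non-Gorenstein types $cA/r$, $cAx/2$, $cAx/4$, $cD/2$, $cD/3$, $cE/2$. In each case one reads off how $r_P$ and $aw(P\in X)$ are governed by the indices $r_{Q_j}$ and axial weights $aw(Q_j\in Y)$; inserting the inductive bounds yields $r_P\le 2d$ and the corresponding bound on $aw(P\in X)$. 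The extra $+1$ in the definition of the axial multiplicity in the $cAx/4$ case is harmless, since that type forces $r_P=4$ and hence $d\ge 2$, giving multiplicity $\le aw(P\in X)+1\le 2d$.

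The main obstacle is precisely this transformation rule: one must control how much the Cartier index and the axial weight can grow when passing from the more resolved points $Q_j\in Y$ back down to $P\in X$ across a single $w$-morphism. This is a genuinely case-dependent local computation resting on the explicit geometry of the weighted blow-ups, and the constant $2$ in the statement reflects the worst case among the six singularity types, equivalently the fact that resolving a singularity of Cartier index $r$ by $w$-morphisms requires at least $r/2$ steps.
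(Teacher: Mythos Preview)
Your route differs substantially from the paper's. The paper does not argue by induction on depth at all: it simply invokes the existing inequality $\Xi(P\in X)\le 2\,dep(P\in X)$ from \cite[Lemma 3.2]{cz}, where $\Xi(P\in X)=r_1+\cdots+r_k$ is the sum of the basket indices. Part (1) is then immediate, since a bound on $\sum r_i$ forces both the number of basket points and each $r_i$ (hence each $b_i\le r_i/2$) into a finite range. For (2) the paper quotes \cite[Remark 2.1]{ch}, which records that the Cartier index and the axial multiplicity of $P\in X$ are each $\le \Xi(P\in X)$. So both parts drop out of the single numerical bound on $\Xi$, and the whole proof is two lines of citations.

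Your strategy is in principle workable, and your deduction of (1) from (2) is fine, but the heart of your argument---the ``transformation rule'' controlling how $r_P$ and $aw(P\in X)$ relate to the indices and axial weights of the points $Q_j$ on a single $w$-morphism---is exactly what you leave unproven. Carrying this out through the six singularity types via the weighted blow-up descriptions in \cite{h} amounts to reproving the content of \cite[Lemma 3.2]{cz} from scratch. That is a nontrivial case analysis (and the constant $2$ really is tight in some families), so as written your proposal has a genuine gap at its key step. If you want a self-contained argument, it is cleaner to aim directly for $\Xi(P\in X)\le 2\,dep(P\in X)$ by induction on depth, since $\Xi$ behaves additively under the basket decomposition and dominates both the Cartier index and the axial multiplicity; this is essentially what \cite{cz} does.
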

\begin{proof}
	By \cite[Lemma 3.2]{cz} we know that $\Xi(P\in X)\leq 2dep(X)$. Hence the basket data of $P\in X$ belongs to a finite set which depends only on $dep(P\in X)$ and (1) follows. By \cite[Remark 2.1]{ch} we know that the Cartier index of $K_X$ near $P$ and the axial multiplicity of $P\in X$ are both bounded by $\Xi(P\in X)$. This proves (2).
\end{proof}

\subsubsection{The singular Riemann-Roch formula}\label{srr}
Given a projective terminal threefold $X$ and a Weil divisor $D$ on $X$, we have the following version of the Riemann-Roch formula due to Reid \cite{r2}:
\begin{align*}
\chi(\Oo_X(D))=\chi(\Oo_X)&+\frac{1}{12}D(D-K_X)(2D-K_X)+\frac{1}{12}D.c_2(X)\\&+
	\sum_{P\in\B(X)}\left(-i_P\frac{r_P^2-1}{12r_P}+\sum_{j=1}^{i_P-1}\frac{\overline{jb_P}(r_P-\overline{jb_P})}{2r_P}\right),
\end{align*}
where $\B(X)=\{(r_P,b_P)\}$ is the basket data of $X$ and $i_P$ is an integer such that $\Oo_X(D)\cong\Oo_X(i_PK_X)$ near $P$.\par
In particular, if  $D=K_X$, we have \[K_X \cdot c_2(X)=-24\chi(\Oo_X)+\sum_{P\in\B(X)}\left(r_P-\frac{1}{r_P}\right).
\]
\subsubsection{Chen-Hacon factorisation}\label{sch}
We will extensively use the following result: 

\begin{thm}\cite[Theorem 3.3]{ch} \label{chf}
	Assume that either $X\dashrightarrow X'$ is a flip over $W$, or $X\rightarrow W$ is a divisorial contraction to a curve such that $X$ is not Gorenstein over $W$. 
	
	Then there exists a diagram
	\[\vc{\xymatrix@C=0.8cm{Y_1\ar[d] \ar@{-->}[r] & ... \ar@{-->}[r] & Y_k\ar[d]\\ X\ar[rd] & & X'\ar[ld] \\ & W & }}\]
such that 
\begin{enumerate}
\item $Y_1\rightarrow X$ is a $w$-morphism, 
\item $Y_k\rightarrow X'$ is a divisorial contraction, 
\item $Y_1\dashrightarrow Y_2$ is either a flip or a flop over $W$ and 
\item $Y_i\dashrightarrow Y_{i+1}$ is a flip over $W$ for $i>1$. 
\end{enumerate}
Moreover,  if $X\rightarrow W$ is a divisorial contraction then $X'\rightarrow W$ is a divisorial contraction to a point.
\end{thm}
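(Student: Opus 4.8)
The plan is to treat the flipping case and the divisorial case uniformly by building, over the base $W$, a factorisation consisting of a $w$-morphism followed by a relative two-ray game, and then identifying the output of that game. First I would record that in both cases $-K_X$ is $\phi$-ample over $W$ and that $X$ is not Gorenstein over $W$: in the divisorial case this is part of the hypothesis, while in the flipping case it follows from the fact that a Gorenstein terminal threefold admits no $K$-negative small contraction (such contractions are flops, i.e. $K$-trivial). Hence, by Hayakawa's existence theorem \cite[Theorem 1.2]{h}, there is a $w$-morphism $g_1\colon Y_1\to X$ extracting a single divisor $E_1$ with discrepancy $a(X,E_1)=\frac{1}{r_P}$ over a suitable non-Gorenstein point $P$ lying over the contracted locus. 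This gives condition (1), and since $g_1$ is extremal over $X$ we obtain $\rho(Y_1/W)=\rho(Y_1/X)+\rho(X/W)=1+1=2$.

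Next I would run the relative $K_{Y_1}$-MMP over $W$, which because $\rho(Y_1/W)=2$ is governed by the two-ray game. The cone $\overline{NE}(Y_1/W)$ is spanned by two rays: the $g_1$-exceptional ray $R_1$, along which $K_{Y_1}\cdot R_1=\frac{1}{r_P}E_1\cdot R_1<0$ (using $K_{Y_1}=g_1\st K_X+\frac{1}{r_P}E_1$ and $E_1\cdot R_1<0$), and a complementary ray $R_2$ lying over the $\phi$-contracted locus. The idea is to run the \emph{directed} MMP that at each stage resolves the extremal ray other than the one pointing back towards the previous model, so that $R_1$ is never contracted. The first resolution of $R_2$ is either $K_{Y_1}$-trivial, in which case it is a flop, or $K_{Y_1}$-negative and small, in which case it is a flip, yielding $Y_1\dashrightarrow Y_2$ and condition (3). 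At each subsequent model $Y_i$ the game again presents a backward ray and a forward ray to be resolved; since flips and flops preserve the Picard number, every intermediate step keeps $\rho(Y_i/W)=2$, and after the first step the forward ray is strictly $K$-negative and small, so these steps are flips, giving condition (4). Termination of the sequence follows from the termination of three-dimensional terminal flips.

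The heart of the argument is to show that the two-ray game terminates precisely with a divisorial contraction $Y_k\to X'$, rather than with a Mori fibre space. Here I would use that $g_1$, $\phi$, and all the intermediate flips and flops are birational, so $Y_k\dashrightarrow W$ is birational; consequently the terminating contraction on the forward ray cannot be of fibre type and must be birational, i.e. divisorial, giving condition (2). Since the intermediate steps are small, the strict transform $E_k$ of $E_1$ survives on $Y_k$, and the final contraction contracts a divisor to a point, leaving $\rho(X'/W)=\rho(Y_k/W)-1=1$. As $X'$ is the output of a $K_{Y_1}$-MMP over $W$ it is terminal with $K_{X'}$ nef over $W$; in the flipping case it is small over $W$ with $K_{X'}$ $\phi'$-ample, so by uniqueness of flips $X'$ is the flip of $X$, while in the divisorial case the remaining exceptional divisor is contracted by $X'\to W$ to a point, which is the "moreover" statement.

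I expect the main obstacle to be the third paragraph: controlling the directed two-ray game so that (a) no intermediate step is divisorial — equivalently, that $E_i$ and the $\phi$-exceptional locus stay apart from the flipping curves in the appropriate numerical sense — and (b) the final model carries exactly one divisorial contraction landing on a terminal $X'$ of the prescribed type over $W$. Making (a) and (b) precise requires a local analysis of the singularities of the $Y_i$ along the flipping curves together with careful discrepancy bookkeeping, so that the negativity lemma and the comparison of discrepancies before and after the $w$-morphism guarantee both that $E_k$ survives until the last step and that $X'$ is again terminal.
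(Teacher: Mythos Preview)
The paper does not prove Theorem~\ref{chf}: it is quoted verbatim from \cite[Theorem 3.3]{ch} and used as a black box throughout \S\ref{sch} and Section~3. There is therefore no proof in the present paper to compare your proposal against.

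For what it is worth, your outline does match the architecture of the Chen--Hacon argument --- extract a $w$-morphism over a non-Gorenstein point of the flipping/contracted locus, run the two-ray game over $W$, and recognise the terminating step as a divisorial contraction --- and you correctly flag in your final paragraph that the real content lies in controlling that game. One point you treat as a dichotomy but which is not automatic is the sign of $K_{Y_1}$ on the forward ray $R_2$: from $K_{Y_1}=g_1^{\ast}K_X+\tfrac{1}{r_P}E_1$ one gets, for a curve $C$ spanning $R_2$, $K_{Y_1}\cdot C=K_X\cdot g_1(C)+\tfrac{1}{r_P}\,E_1\cdot C$, and nothing you have said bounds $E_1\cdot C$ from above. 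Ruling out a $K$-positive forward ray (which would call for an anti-flip rather than a flip or flop), and showing that the forward contraction is small at this and every subsequent stage, is precisely the technical core of \cite[Theorem~3.3]{ch}; it depends on the specific choice of non-Gorenstein point over which the $w$-morphism is taken and on a discrepancy comparison, not merely on ``bookkeeping''. Your treatment of the ``moreover'' clause is likewise only a gesture: one must explain why the divisor surviving on $X'$ is contracted to a point rather than again to a curve, and this uses that $K_{X'}$ is nef over $W$ together with the negativity lemma to distinguish $X'$ from $X$.
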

\begin{rk}\label{flpc}
	Notation as in the above theorem. 
	\begin{enumerate}[(1)]
	\item Assume that $X\dashrightarrow X'$ is a flip, $C_{Y_1}$ is the flipping/flopping curve of $Y_1\dashrightarrow Y_2$ and $C_X$ is the image of $C_{Y_1}$ on $X$. Then $C_X$ is a flipping curve of $X\dashrightarrow X'$. This fact follows from the construction of the diagram.
	\item By \cite[Proposition 3.5]{ch}, we have
	 \[dep(X)-1=dep(Y_1)\geq dep(Y_2)>...>dep(Y_k).\]  In particular, $k\leq dep(X)+1$.
	\item  By \cite[Proposition 3.5 and the proof of Proposition 3.6]{ch}, we have
	\[
	dep(X')<dep(X).
	\]
	\end{enumerate}
\end{rk}

\begin{lem}\label{depr}
	Assume that $X$ is a smooth threefold and \[ X=X_0\dashrightarrow X_1\dashrightarrow ...\dashrightarrow X_k\] is a sequence of steps of a  $K_X$-MMP. 
	
	Then $dep(X_i)<\rho(X)$ for all $i$.
\end{lem}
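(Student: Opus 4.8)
The plan is to run the argument step by step, tracking the single quantity $dep(X_i)+\rho(X_i)$ and showing that it never exceeds its initial value. Since $X=X_0$ is smooth it is Gorenstein, so the empty sequence of $w$-morphisms already reaches a Gorenstein variety and hence $dep(X_0)=0$; thus the initial value is exactly $dep(X_0)+\rho(X_0)=\rho(X)$. Each step of the MMP is either a flip or a divisorial contraction, so I would treat the two cases separately and verify in each that $dep+\rho$ does not increase.

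For a flip $X_i\dashrightarrow X_{i+1}$, the birational map is small, so the Picard number is preserved, $\rho(X_{i+1})=\rho(X_i)$, while Remark \ref{flpc}(3) gives $dep(X_{i+1})<dep(X_i)$. Hence $dep(X_{i+1})+\rho(X_{i+1})<dep(X_i)+\rho(X_i)$ and the quantity strictly drops. For a divisorial contraction $X_i\to X_{i+1}$ the contracted ray is extremal, so $\rho(X_{i+1})=\rho(X_i)-1$; it therefore suffices to establish the key estimate $dep(X_{i+1})\le dep(X_i)+1$, which together with the drop in $\rho$ again yields $dep(X_{i+1})+\rho(X_{i+1})\le dep(X_i)+\rho(X_i)$. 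Granting the two case estimates, the quantity $dep(X_i)+\rho(X_i)$ is non-increasing along the MMP, so $dep(X_i)+\rho(X_i)\le \rho(X)$ for every $i$. Since each $X_i$ is projective we have $\rho(X_i)\ge 1$, and therefore $dep(X_i)\le \rho(X)-\rho(X_i)\le \rho(X)-1<\rho(X)$, which is the desired conclusion.

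The main obstacle is the estimate $dep(X_{i+1})\le dep(X_i)+1$ at a divisorial contraction, the analogue for divisorial contractions of the strict drop recorded in Remark \ref{flpc}(3). I would prove it by distinguishing the two kinds of divisorial contraction. If $\phi_i\colon X_i\to X_{i+1}$ contracts its divisor to a point $P$, then over every point other than $P$ nothing changes, and I would compare $\phi_i$ with a $w$-morphism $Y\to X_{i+1}$ realising the first step of an optimal depth tower at $P$, so that $dep(X_{i+1})=dep(Y)+1$; since $Y$ and $X_i$ are both single extremal extractions of the singularity at $P$, one checks $dep(Y)\le dep(X_i)$, whence the bound. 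If $\phi_i$ contracts its divisor to a curve, I would invoke the Chen--Hacon factorisation (Theorem \ref{chf}): when $X_i$ is not Gorenstein over $W=X_{i+1}$ one obtains a diagram factoring the situation through a $w$-morphism, a sequence of flips and flops, and a divisorial contraction to a point, so the estimate reduces to the point case together with Remark \ref{flpc}(3); when $X_i$ is Gorenstein over $W$ the contraction is mild enough that the bound can be checked directly.

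Making the point-contraction comparison precise — in particular controlling how much the local depth at the image point can jump under a single extremal extraction — is the delicate part of the argument. This is exactly where the classification of three-dimensional terminal singularities and of their $w$-morphisms enters, together with the numerical control on baskets and Cartier indices provided by Lemma \ref{bka}; the remaining bookkeeping with $\rho$ is then formal.
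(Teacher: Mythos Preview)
Your argument is essentially the paper's: both reduce the lemma to the two step-estimates $dep(X_{i+1})<dep(X_i)$ for a flip and $dep(X_{i+1})\le dep(X_i)+1$ for a divisorial contraction, and then deduce $dep(X_i)\le \rho(X)-1$ either by tracking the invariant $dep+\rho$ (your version) or, equivalently, by bounding $dep(X_i)$ by the number $m<\rho(X)$ of divisorial contractions preceding step $i$ (the paper's version).

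The difference is that the paper simply cites \cite[Propositions 2.15, 3.5, 3.6]{ch} for these two estimates, whereas you try to re-derive the divisorial one. Your sketch in the contraction-to-a-point case is not a proof: the claim ``one checks $dep(Y)\le dep(X_i)$'' for the $w$-extraction $Y\to X_{i+1}$ versus the given extraction $X_i\to X_{i+1}$ is exactly the content of \cite[Proposition 2.15]{ch}, whose argument goes through discrepancy computations rather than through Lemma~\ref{bka} (which only bounds Cartier indices and baskets in terms of depth and does not compare depths of two extractions). Likewise, the Gorenstein curve case you leave as ``mild enough'' still needs an argument. It is cleaner to cite \cite{ch} directly, as the paper does.
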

\begin{proof}
	By \cite[Proposition 2.15, Proposition 3.5, Proposition 3.6]{ch} we know that $dep(X_i)\geq dep(X_{i+1})-1$ if $X_i\rightarrow X_{i+1}$ is a divisorial contraction, and $dep(X_i)>dep(X_{i+1})$ if $X_i\dashrightarrow X_{i+1}$ is a flip. It follows that if $m$ is the number of divisorial contractions in the MMP, then $dep(X_i)\leq m$ for all $i$. Since $m<\rho(X)$, we know that $dep(X_i)<\rho(X)$.
\end{proof}
\subsection{Negativity lemma}

We recall the negativity lemma for flips:
\begin{lem}\label{ntl} Let $(X,D)$ be a log pair and  assume that $X\dashrightarrow X'$ is a $(K_X+D)$-flip. 
	
	Then for any exceptional divisor $E$ over $X$, we have that $a(E,X,D)\leq a(E,X',D_{X'})$ where $D_{X'}$ is the strict transform of $D$ in $X'$. Moreover, the inequality is strict if the centre of $E$ in $X$ is contained in the flipping locus.
\end{lem}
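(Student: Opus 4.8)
The plan is to reduce the whole statement to the negativity lemma for a proper birational morphism. Write the flip as $X\xrightarrow{\phi} W \xleftarrow{\phi'} X'$, where $\phi$ is the flipping contraction, so that $-(K_X+D)$ is $\phi$-ample, $K_{X'}+D_{X'}$ is $\phi'$-ample, and both $\phi,\phi'$ are isomorphisms in codimension one. I would choose a common resolution $p\colon Y\to X$, $q\colon Y\to X'$ with $f\coloneqq \phi\circ p=\phi'\circ q$, enlarging $Y$ if necessary so that the given $E$ appears as a prime divisor on $Y$. Since the flip is an isomorphism in codimension one, the strict transform $D_Y$ of $D$ coincides with that of $D_{X'}$, so comparing the two discrepancy identities $K_Y+D_Y=p^*(K_X+D)+\sum_i a(E_i,X,D)E_i$ and $K_Y+D_Y=q^*(K_{X'}+D_{X'})+\sum_i a(E_i,X',D_{X'})E_i$ gives
\[ M\coloneqq p^*(K_X+D)-q^*(K_{X'}+D_{X'})=\sum_i\big(a(E_i,X',D_{X'})-a(E_i,X,D)\big)E_i, \]
a divisor supported on the $p$-exceptional primes. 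The entire statement is then equivalent to showing that $M$ is effective, with strictly positive coefficient along every $E_i$ whose centre on $X$ lies in the flipping locus.

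First I would verify nefness. Since $-M=q^*(K_{X'}+D_{X'})+p^*\big(-(K_X+D)\big)$ is a sum of the pullbacks of a $\phi'$-ample and a $\phi$-ample divisor, $-M$ is $f$-nef. Moreover $f_*M=\phi_*(K_X+D)-\phi'_*(K_{X'}+D_{X'})=0$, because $\phi$ and $\phi'$ agree in codimension one and push $D$ and $D_{X'}$ to the same divisor on $W$. Applying the negativity lemma to $f$ (in the form of \cite[Lemma 3.39]{km}) then yields $M\geq 0$, which is exactly $a(E,X,D)\leq a(E,X',D_{X'})$ for every $E$; over the open locus where $\phi$ is an isomorphism the corresponding coefficient vanishes automatically, since there $X,D$ and $X',D_{X'}$ coincide.

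For the strict inequality I would argue with curves. Let $E$ have centre $c_X(E)\subseteq\operatorname{Exc}(\phi)$, and pick a flipping curve $C$ meeting $c_X(E)$, so that $(K_X+D)\cdot C<0$ by $\phi$-ampleness of $-(K_X+D)$. Lifting $C$ to a curve $\tilde C\subset Y$ dominating it gives $p_*\tilde C=d\,C$ with $d>0$, while $q_*\tilde C$ is contracted by $\phi'$, whence $(K_{X'}+D_{X'})\cdot q_*\tilde C\geq 0$. Therefore
\[ M\cdot\tilde C=d\,(K_X+D)\cdot C-(K_{X'}+D_{X'})\cdot q_*\tilde C<0, \]
so $\tilde C$ cannot be disjoint from the support of the effective divisor $M$, i.e. $\tilde C\subseteq\operatorname{Supp}M$. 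To propagate strictness from this one curve to the prescribed $E$, I would invoke the second part of \cite[Lemma 3.39]{km}, which forces $\operatorname{Supp}M$ to be a union of entire fibres of $f$: as $c_X(E)$ meets $C$, the divisor $E$ lies over the same fibre as $\tilde C$ and hence also lies in $\operatorname{Supp}M$, giving $a(E,X,D)<a(E,X',D_{X'})$.

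The main obstacle, and the only delicate point, is this last propagation step. The curve computation only exhibits strict positivity of $M$ along components meeting a lifted flipping curve, so one genuinely needs the connectedness/fibre form of the negativity lemma, rather than its bare effectivity form, to conclude that \emph{every} exceptional divisor centred in the flipping locus — including those contracted to a single point of $\operatorname{Exc}(\phi)$ — acquires a positive coefficient. A subsidiary technical point is the freedom to enlarge the common resolution $Y$ so that the given $E$ is realized as a prime divisor without disturbing the relative ampleness used above.
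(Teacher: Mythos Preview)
Your argument is correct and is precisely the standard proof of the result the paper cites: the paper simply appeals to \cite[Lemma 3.38]{km2}, and what you have written is essentially the proof of that lemma via the negativity lemma \cite[Lemma 3.39]{km2}. One small fix: your citations should point to \cite{km2} (the Koll\'ar--Mori book) rather than \cite{km} (their 1992 J.\ Amer.\ Math.\ Soc.\ paper).
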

\begin{proof}
	The Lemma is a special case of \cite[Lemma 3.38]{km2}.
\end{proof}
\begin{cor}\label{ntl2}
Let $(X,D)$ be a log pair and assume that $X\dashrightarrow X'$ is a $(K_X+D)$-flip and $C\subset X$ is an irreducible curve which is not a flipping curve. 

Then $(K_X+D)\cdot C\geq(K_{X'}+D_{X'})\cdot C_{X'}$ where $C_{X'}$ and $D_{X'}$ are the strict transform on $X'$ of $C$ and $D$ respectively. Moreover, the inequality is strict if $C$ intersects the flipping locus non-trivially.
\end{cor}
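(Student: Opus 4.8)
The plan is to compare the two intersection numbers on a common resolution and to feed the resulting difference into the negativity lemma (Lemma \ref{ntl}). Write $\phi\colon X\to W$ for the flipping contraction, $\phi'\colon X'\to W$ for the flipped contraction, and $Z:=\operatorname{Exc}(\phi)$ for the flipping locus. Since a flip is an isomorphism in codimension one, $D_{X'}$ is literally the strict transform of $D$, and the indeterminacy locus of $X\dashrightarrow X'$ is $Z$, which has codimension at least two. First I would choose a smooth projective variety $Y$ together with proper birational morphisms $p\colon Y\to X$ and $q\colon Y\to X'$ resolving this indeterminacy, chosen so that $p$ is an isomorphism over $X\setminus Z$. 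Let $\Gamma\subset Y$ be the strict transform of $C$. Because $C$ is not a flipping curve it is not contained in $Z$, so $\Gamma\not\subset\operatorname{Exc}(p)$ and $p_*\Gamma=C$, $q_*\Gamma=C_{X'}$.

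The key computation is to expand both pullbacks. With this choice every $p$-exceptional prime divisor has centre inside $Z$, hence is also $q$-exceptional (and conversely), so writing $\tilde D$ for the common strict transform of $D$ on $Y$ we have
\begin{align*}
K_Y+\tilde D&=p^*(K_X+D)+\sum_F a(F,X,D)\,F,\\
K_Y+\tilde D&=q^*(K_{X'}+D_{X'})+\sum_F a(F,X',D_{X'})\,F,
\end{align*}
both sums running over the same finite set of exceptional prime divisors $F$. Subtracting gives
\[
E:=p^*(K_X+D)-q^*(K_{X'}+D_{X'})=\sum_F\bigl(a(F,X',D_{X'})-a(F,X,D)\bigr)F.
\]
Since each such $F$ has centre in $Z$, the strict form of Lemma \ref{ntl} shows every coefficient is strictly positive; in particular $E$ is effective and $\operatorname{Supp}(E)=\operatorname{Exc}(p)=p^{-1}(Z)$.

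It then remains to apply the projection formula: $(K_X+D)\cdot C=p^*(K_X+D)\cdot\Gamma$ and $(K_{X'}+D_{X'})\cdot C_{X'}=q^*(K_{X'}+D_{X'})\cdot\Gamma$, whence
\[
(K_X+D)\cdot C-(K_{X'}+D_{X'})\cdot C_{X'}=E\cdot\Gamma.
\]
As $E$ is effective and $\Gamma\not\subset\operatorname{Supp}(E)$, every local intersection multiplicity is nonnegative and $E\cdot\Gamma\ge 0$, giving the inequality. If moreover $C$ meets $Z$, then $\Gamma$ meets $p^{-1}(Z)=\operatorname{Supp}(E)$ at a point where it is not contained in $E$, so at least one component of $E$ contributes a strictly positive local multiplicity and $E\cdot\Gamma>0$.

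I expect the only delicate point to be the bookkeeping in the second paragraph, namely justifying that $p$ may be taken to be an isomorphism over $X\setminus Z$ and that this forces every exceptional divisor of $p$ to coincide with an exceptional divisor of $q$ with centre in $Z$. This identification is what makes the \emph{strict} part of Lemma \ref{ntl} applicable to \emph{all} exceptional divisors, and hence is exactly what upgrades the inequality to a strict one when $C$ meets the flipping locus; the remaining steps are formal consequences of the projection formula and the effectivity of $E$.
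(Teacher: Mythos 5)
Your argument is correct and is essentially the paper's own proof: both take a common resolution, use Lemma \ref{ntl} to see that the difference of pullbacks $E=p^*(K_X+D)-q^*(K_{X'}+D_{X'})$ is effective and supported exactly over the flipping locus, and conclude by the projection formula together with $\Gamma\not\subset\operatorname{Supp}(E)$. The only cosmetic caveat is that you need not (and, if $X$ is singular away from the flipping locus, cannot) insist that $p$ be an isomorphism over $X\setminus Z$ with $Y$ smooth; it suffices to note that any exceptional divisor whose centre lies outside $Z$ has equal discrepancy on both sides and hence does not appear in $E$, which is exactly how the paper phrases the support statement.
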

\begin{proof}
	Let $\xymatrix{X & W\ar[l]_{\phi} \ar[r]^{\phi'} & X'}$ be a common resolution such that $C$ is not contained in the indeterminacy locus of $\phi$. Then Lemma \ref{ntl} implies that $F\coloneqq \phi\st(K_X+D)-{\phi'}\st(K_{X'}+D_{X'})$ is an effective divisor and is supported on exactly those exceptional divisors whose centres on $X$ are contained in the flipping locus. Hence
	\begin{align*}
		(K_X+D)\cdot C-(K_{X'}+D_{X'})\cdot C_{X'}&=(\phi\st(K_X+D)-{\phi'}\st(K_{X'}+D_{X'}))\cdot C_W\\
		&=F\cdot C_W\geq 0
	\end{align*}
	where $C_W$ is the strict transform of $C$ on $W$. The last inequality is strict if and only if $C_W$ intersects $F$ non-trivially, or equivalently, $C$ intersects the flipping locus	non-trivially.
\end{proof}

\section{Geometry of flips}

\begin{cov}
	Let $H_0$ be a (germ of a) Du Val surface and let $\bar{H}\rightarrow H_0$ be the minimal resolution of $H_0$. We denote $\bar{\rho}(H_0)\coloneqq \rho(\bar{H}/H_0)$.\par
	Let $\phi\colon H\rightarrow H_0$ be a partial resolution of $H_0$, i.e.  $H$  also admits Du Val singularities and $\bar{H}$ is also the minimal resolution of $H_0$. Let $D$ be an effective divisor on $H$. We denote \[\delta_{H\rightarrow H_0}(D)\coloneqq \sum_{i=1}^m\mmul_{\Gamma_i}D\] where the sum runs over all the $\phi$-exceptional divisors.
\end{cov}

\begin{lem}\label{del} 
Fix a positive integer $n$. 
	Let
	\[
	\vc{\xymatrix@R=0.5cm{ X\ar[rd]_{\phi} & & X'\ar[ld]^{\phi'} \\ & W & }}
	\]
	 be a three-dimensional terminal flip and let  $W\rightarrow W_0$ be a birational morphism. Assume that \begin{enumerate}[(i)]
	\item there exist an analytic neighborhood $U_0\subset W_0$ and a Du Val section $H_0\in|-K_{U_0}|$, such that the image of the flipping locus of $X\dashrightarrow X'$ on $W_0$ is contained in $U_0$, and both $H\rightarrow H_0$ and $H'\rightarrow H_0$ are partial resolutions, where $H$ and $H'$ are the proper transforms of $H_0$ on $X$ and $X'$ respectively;
	\item there is an effective divisor $D'\subset X'$ which is contracted by the induced morphism $X'\rightarrow W_0$; and 
	\item $\bar{\rho}(H_0)$, $\delta_{H'\rightarrow H_0}(D'|_{H'})$ and $dep(X)$ are all bounded by $n$.
	\end{enumerate}
	
	Then
	\begin{enumerate}[(1)]
	\item for any flipped curve $C'$, we have that $|D'\cdot C'|$ is bounded by an integer which depends only on $n$; 
	\item  for any flipping curve $C$, if $D$ is the proper transform of $D'$ on $X$ then  $|D\cdot C|$ is bounded by an integer which depends only on $n$; and 
	\item $\delta_{H\rightarrow H_0}(D|_H)$ is bounded by an integer which depends only on $n$.
	\end{enumerate}
\end{lem}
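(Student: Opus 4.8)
The plan is to transfer the entire problem to intersection theory on the Du Val surface $H_0$ through its minimal resolution $\pi\colon\bar H\to H_0$, whose exceptional locus is a negative definite configuration of $\bar\rho(H_0)\le n$ $(-2)$-curves $\Gamma_1,\dots,\Gamma_N$. Consequently all intersection numbers among the $\Gamma_i$, and the inverse of their intersection matrix, are bounded in terms of $n$. Writing $q\colon\bar H\to H$ and $q'\colon\bar H\to H'$ for the induced morphisms, the $(H\to H_0)$- and $(H'\to H_0)$-exceptional curves are precisely the images under $q$, resp.\ $q'$, of those $\Gamma_i$ that are not contracted. Since $H,H'$ are partial resolutions of the general elephant, adjunction gives $K_H=K_{H'}=0$ and $K_{X'}|_{H'}=-H'|_{H'}$, and the flipping (resp.\ flipped) curves lie on $H$ (resp.\ $H'$), each being one of these exceptional curves. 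I would first record these facts together with the observation that, for a terminal flip, the Cartier indices are bounded by $2\,dep(X)\le 2n$ by Lemma \ref{bka}; hence $|K_X\cdot C|$ and $|K_{X'}\cdot C'|$ are bounded in terms of $n$.

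The backbone of the argument is that a flip has relative Picard number one, $\rho(X/W)=\rho(X'/W)=1$. Thus $K_{X'}$ generates $N^1(X'/W)_{\mathbb{Q}}$, so there is a unique $\lambda\in\mathbb{Q}$ with $D'-\lambda K_{X'}=\phi'^{*}\alpha$ for a divisor $\alpha$ on $W$. For every flipped curve $C'$, which is $\phi'$-contracted, this gives $D'\cdot C'=\lambda\,(K_{X'}\cdot C')$, so (1) reduces to bounding $\lambda$. Passing to a common resolution $g\colon Z\to X$, $g'\colon Z\to X'$ and using that $D$ is the strict transform of $D'$ while the flip is an isomorphism in codimension one (so $\phi_{*}D=\phi'_{*}D'$ and $\phi_{*}K_X=\phi'_{*}K_{X'}=K_W$), I would show $g^{*}(D-\lambda K_X)=g'^{*}(D'-\lambda K_{X'})$; since $K_{X'}$ is not $g'$-exceptional, comparing the two sides forces $D\equiv_{W}\lambda K_X$ with the \emph{same} $\lambda$. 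Hence $D\cdot C=\lambda\,(K_X\cdot C)$ for every flipping curve $C$, so (2) also reduces to the single bound on $\lambda$, and then $|D\cdot C|\le|\lambda|\cdot|K_X\cdot C|$ is bounded in terms of $n$.

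To bound $\lambda$ I would restrict to $H'$ and compute on $\bar H$. The pullback $q'^{*}(D'|_{H'})$ splits as a horizontal part $\mathcal H$ (the strict transform of the image curve $\pi'_{*}(D'|_{H'})\subset H_0$, where $\pi'\colon H'\to H_0$) plus an exceptional part $\sum_i c'_i\Gamma_i$; the coefficients $c'_i$ at the non-$q'$-contracted indices are exactly the multiplicities summing to $\delta_{H'\to H_0}(D'|_{H'})\le n$, while the remaining $c'_i$ are determined by the conditions $q'^{*}(D'|_{H'})\cdot\Gamma_k=0$ through the bounded inverse of the intersection matrix of the $q'$-contracted curves. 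Restricting $D'-\lambda K_{X'}=\phi'^{*}\alpha$ to $H'$ shows $D'|_{H'}-\lambda K_{X'}|_{H'}$ is pulled back from $H_0$, so its pullback $P$ to $\bar H$ satisfies $P\cdot\Gamma_i=0$ for all $i$; intersecting $q'^{*}(D'|_{H'})=\lambda\,q'^{*}(K_{X'}|_{H'})+P$ with a suitable $\Gamma_i$ then extracts $\lambda$ as a ratio of bounded intersection numbers. Finally, (3) follows by the same linear algebra on $\bar H$: the horizontal part of $q^{*}(D|_H)$ equals $\mathcal H$ as well, and the vertical multiplicities of $D|_H$, whose sum is $\delta_{H\to H_0}(D|_H)$, are recovered from the bounded numbers $D\cdot\Gamma_i^{H}$ (bounded via (1), (2) and $D\equiv_{W}\lambda K_X$) by inverting the bounded intersection matrix of the $q$-contracted curves.

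The main obstacle is controlling the horizontal part $\mathcal H$. The hypothesis $\delta_{H'\to H_0}(D'|_{H'})\le n$ only bounds the \emph{vertical} multiplicities of $D'|_{H'}$, whereas $D'\cdot C'$ and the determination of $\lambda$ a priori also involve $\mathcal H\cdot\Gamma_i$, that is, how the image curve of $D'$ passes through the Du Val point resolved by $\Gamma_i$ when this image lies in $H_0$. This is exactly where assumption (ii) is essential: since $D'$ is contracted by the induced morphism $\psi'\colon X'\to W_0$ and $\phi'$ is small (so $D'$ cannot be $\phi'$-exceptional), the image $\psi'(D')$ is forced into the contracted locus of $W\to W_0$, and together with $dep(X)\le n$ this should pin down the local multiplicity of $\mathcal H$ along the $\Gamma_i$ in terms of $n$. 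Making this local control precise — and verifying that one and the same $\lambda$ governs both sides through the chosen simultaneous elephant $H_0$ — is the technical heart of the proof; the remaining reductions are formal consequences of $\rho(\,\cdot/W)=1$, of adjunction on the elephant, and of the boundedness of the Du Val combinatorics encoded in $\bar\rho(H_0)\le n$.
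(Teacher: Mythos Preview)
Your framework is right—restrict to the elephant and use $\rho(X'/W)=1$ to write $D'\equiv_W\lambda K_{X'}$—but you invert the logical order and thereby manufacture an obstacle that is not present. The paper proves (1) \emph{first and directly}: assumption (ii) forces the effective $1$-cycle $D'|_{H'}$ to be supported on the exceptional locus of $H'\to H_0$ (its image in $H_0$ lies in the image of $D'$ in $W_0$, which is a point), so there is no horizontal part $\mathcal H$ at all and $D'|_{H'}=\sum_i a_i\Gamma'_i$ with $\sum a_i=\delta_{H'\to H_0}(D'|_{H'})\le n$. Since $H'\cdot C'=-K_{X'}\cdot C'<0$, every flipped curve $C'$ is literally one of the $\Gamma'_k$, and then $D'\cdot C'=\sum_i a_i(\Gamma'_i\cdot\Gamma'_k)$ is bounded purely by the Du Val combinatorics encoded in $\bar\rho(H_0)\le n$. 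No passage to the minimal resolution $\bar H$ is needed.

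With (1) in hand, $|\lambda|=|D'\cdot C'|/|K_{X'}\cdot C'|$ is bounded because $K_{X'}\cdot C'\ge 1/r'$ with $r'\le 2n$; then (2) follows from $D\equiv_W\lambda K_X$ and $|K_X\cdot C|<1$. Your route instead tries to bound $\lambda$ first and deduce (1) via $D'\cdot C'=\lambda\,K_{X'}\cdot C'$, but for that direction you need an \emph{upper} bound on $K_{X'}\cdot C'$. This does not follow from the Cartier index (which only gives the denominator); it is exactly Lemma~\ref{kcb}, whose proof uses the present lemma, so your argument as written is circular. For (3) the paper again stays on $H$ rather than $\bar H$: after reducing via an analytic MMP over $W$ to an irreducible flipping curve $C$, one writes $D|_H=\sum a_i\Gamma_i+mC$ where the $\Gamma_i$ are the strict transforms of the non-flipped $\Gamma'_i$ (so $\sum a_i\le\delta_{H'\to H_0}(D'|_{H'})$), and recovers $m$ from the already-bounded number $D\cdot C=(D|_H)\cdot C$ together with the bounded $\Gamma_i\cdot C$ and $C^2$.
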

\begin{proof}
Since $H_0$ is Du Val, inversion of adjunction implies  that $(U_0,H_0)$ is canonical. Since  $H\rightarrow H_0$ is a partial resolution, it follows that if $U_X\subset X$ is the pre-image of $U_0$ on $X$ and 
$\psi\colon U_X \to U_0$ is the induced morphism, then $K_{U_X}+H=\psi\st(K_{U_0}+H_{0})$ and, in particular, $H\in |-K_{U_X}|$. Moreover,  all the  flipping curves of $X\dashrightarrow X'$ are contained in $U_X$. Thus, $H\cdot C=-K_X\cdot C$ for any flipping curve $C$. Likewise, $H'\cdot C'=-K_{X'}\cdot C'$ for any flipped curve $C'$.\par 
	Let $C'\subset X'$ be a flipped curve. Since $H'\cdot C'=-K_{X'}\cdot C'<0$, we know that $C'=\Gamma'_k$ for some $1\leq k\leq q$ where $\Gamma'_1,\dots,\Gamma'_q$ are all the exceptional divisors of $H'\rightarrow H_0$. Since $\rho(\bar{H}/H_0)$ is bounded by $n$, the type of singularity of $H_0$ is bounded, so the intersection numbers $\Gamma'_i\cdot \Gamma'_j$ are all bounded by some constant which depends only on $n$ for all $1\leq i,j\leq q$. Thus, $|D'\cdot C'|=|(D'|_{H'}).\Gamma'_k|$ is bounded.\par
	We can write $D'\equiv_W\lambda K_{X'}$ for some rational number $\lambda$.  Notice that $dep(X')<dep(X)\leq n$ and so the Cartier indices of $K_X$ and $K_{X'}$ are both bounded by $2n$ by Lemma \ref{bka}.	Since $|D'\cdot C'|$ is bounded and the Cartier index of $K_{X'}$ is bounded, it follows that  $|\lambda|$ is bounded by an integer which depends only on $n$. Moreover, we have that $D\equiv_W\lambda K_X$. Since $|K_X\cdot C|<1$ by \cite[Theorem 0]{b} and $\lambda$ is bounded, it follows that $|D\cdot C|$ is bounded by an integer that depends only on $n$.\par
	Now we prove (3). Assume first that there is exactly one flipping curve $C\subset X$. Then we can write $D|_H=\sum a_i\Gamma_i+mC$, where $a_i$ and $m$ are a non-negative rational numbers and the sum runs over all the exceptional curves of $H\to H_0$ which are strict transforms on $H$ of those curves $\Gamma'_1,\dots,\Gamma'_q$ in $H'$ which are not flopped curves. We know that $\delta_{H\rightarrow H_0}(D|_H)=\sum_ia_i+m$ and $\sum_ia_i\leq\delta_{H'\rightarrow H_0}(D'|_{H'})$. Since the intersection number \[ D\cdot C=D|_H\cdot C=\sum a_i\Gamma_i\cdot C+mC^2,\] $\Gamma_i\cdot C$ and $C^2$ are all bounded because $\bar{\rho}(H_0)$ is bounded, we know that $m$ is bounded. Thus, $\delta_{H\rightarrow H_0}(D|_H)$ is bounded.\par
	In general we can run analytic $K_X$-MMP over $W$ and get a composition of flips $X=X_0\dashrightarrow X_1\dashrightarrow...\dashrightarrow X_k=X'$ such that the flipping locus of $X_i\dashrightarrow X_{i+1}$ is irreducible for all $i$. One can show that $\delta_{H\rightarrow H_0}(D|_H)$ is bounded by applying the above argument $k$ times.
\end{proof}

\begin{lem}\label{ecb}
Fix a positive integer $n$. 
	Let $Y\rightarrow X$ be a three-dimensional terminal divisorial contraction which contracts a divisor $E$ to a smooth curve $C$.  Assume that 
	\begin{enumerate}[(1)]
	\item $dep(Y)\leq n$;
	\item there exists a birational morphism $X\rightarrow W_0$ such that $C$ is contracted by this morphism;  
	\item there exist an analytic neighbourhood $U_0\subset W_0$ which contains the image of $C$ on $W_0$, and a Du Val section $H_0\in|-K_{U_0}|$, such that $\bar{\rho}(H_0)\leq n$; and 
	\item if  $H_X$ and $H_Y$ are the strict transforms of $H_0$ on $X$ and $Y$ respectively, then 
	$H_Y\rightarrow H_X\rightarrow H_0$ are partial resolutions and we have that $C\subset H_X$. 
	\end{enumerate}
	
	Then $\delta_{H_Y\rightarrow H_0}(E|_{H_Y})$ is bounded by an integer which depends only on $n$.
\end{lem}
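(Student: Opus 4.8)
The plan is to reduce the statement to bounding finitely many intersection numbers on the Du Val surface $H_Y$, and then to control these numbers using the crepant structure of the elephant $H_0$ together with the bound $dep(Y)\le n$.

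First I would identify the support of $E|_{H_Y}$. Write $g\colon Y\to X$ for the contraction and $g_0\colon X\to W_0$ for the morphism of hypothesis (2), so that $C$ is contracted to a point $P_0:=g_0(C)\in H_0$. Since $E|_{H_Y}$ is supported on $E\cap H_Y=(g|_{H_Y})^{-1}(C)$ and every component of this locus is mapped into $C$ by $g|_{H_Y}$, hence to $P_0$ by $H_Y\to H_0$, every component of $E|_{H_Y}$ is exceptional over $H_0$. Writing $\Gamma_1,\dots,\Gamma_q$ for the exceptional curves of $H_Y\to H_0$ (so $q\le\bar{\rho}(H_0)\le n$), we therefore have $E|_{H_Y}=\sum_j a_j\Gamma_j$ with $a_j\ge 0$ and, by definition, $\delta_{H_Y\rightarrow H_0}(E|_{H_Y})=\sum_j a_j$. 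Because $\bar{\rho}(H_0)\le n$, the type of the Du Val singularity of $H_0$, and hence the whole configuration $\{\Gamma_j\}$ together with its negative definite intersection matrix $M=(\Gamma_i\cdot_{H_Y}\Gamma_j)$, ranges over a finite set depending only on $n$.

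The second step recovers the coefficients $a_j$ from the intersection numbers $v_j:=E\cdot_Y\Gamma_j$. Since each $\Gamma_j\subset H_Y$ we have $v_j=(E|_{H_Y})\cdot_{H_Y}\Gamma_j=(Ma)_j$, so $a=M^{-1}v$; as $M$ lies in a finite set of negative definite matrices, $\|M^{-1}\|$ is bounded in terms of $n$ and it suffices to bound each $|v_j|$. Using $K_Y=g^*K_X+aE$ with $a=a(X,E)$, I can write $v_j=\tfrac1a\big(K_Y\cdot\Gamma_j-K_X\cdot g_*\Gamma_j\big)$, noting that $g_*\Gamma_j=0$ unless $\Gamma_j$ is the strict transform of $C$. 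By Lemma \ref{bka} and the inequality $dep(X)\le dep(Y)+1\le n+1$ for divisorial contractions (cf. the proof of Lemma \ref{depr}), the Cartier indices of $K_X$ and $K_Y$ are at most $2(n+1)$; since $a$ has denominator dividing the Cartier index of $K_X$, we get $a\ge\tfrac1{2n+2}$, and it remains to bound $|K_Y\cdot\Gamma_j|$ and $|K_X\cdot g_*\Gamma_j|$. For this I would use that $H_X\in|-K_{U_X}|$ and $H_Y\in|-K_{U_Y}|$ (inversion of adjunction makes $(U_0,H_0)$ canonical and the partial resolutions crepant, as in the proof of Lemma \ref{del}), so that $-K_Y\cdot\Gamma_j=(H_Y|_{H_Y})\cdot_{H_Y}\Gamma_j$ and $-K_X\cdot g_*\Gamma_j=(H_X|_{H_X})\cdot_{H_X}(g_*\Gamma_j)$. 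Writing $\psi_Y\colon U_Y\to U_0$, $\pi_Y\colon H_Y\to H_0$ and using crepantness, $H_Y=\psi_Y^*H_0-\sum_k b_kE_k$ where the $E_k$ are the $\psi_Y$-exceptional divisors and the $b_k$ the associated discrepancies over $U_0$; restricting to $H_Y$ and intersecting with $\Gamma_j$, the term $(\psi_Y^*H_0)|_{H_Y}\cdot\Gamma_j=\pi_Y^*(H_0|_{H_0})\cdot\Gamma_j$ vanishes by the projection formula, so $-K_Y\cdot\Gamma_j=-\sum_k b_k\,(E_k|_{H_Y})\cdot\Gamma_j$, and similarly on $H_X$. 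Since $(U_0,H_0)$ is canonical, the number of the $E_k$ and the size of the $b_k$ are controlled by the Du Val type of $H_0$, hence by $n$, while the intersection numbers $(E_k|_{H_Y})\cdot\Gamma_j$ of exceptional curves are bounded because $\bar{\rho}(H_0)\le n$. This bounds each $|v_j|$ and therefore $\delta_{H_Y\rightarrow H_0}(E|_{H_Y})=\sum_j a_j$.

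\textbf{Main obstacle.} The crux is exactly this last step: obtaining a uniform \emph{upper} bound (not merely the lower bound coming from the Cartier index) on $-K_Y\cdot\Gamma_j$ and $-K_X\cdot C$. The curves $\Gamma_j$ need not generate the contracted ray, so Kawamata's bound $|K\cdot C|<1$ from \cite{b} does not apply to them directly; the crepant-pullback computation above is designed precisely to convert these normal-bundle intersection numbers into a bounded combination of discrepancies of the canonical pair $(U_0,H_0)$ and intersection numbers within the bounded Du Val configuration. Should controlling the discrepancies $b_k$ of $\psi_Y$ prove delicate when $Y$ is far from Gorenstein over $X$, an alternative would be to apply the Chen--Hacon factorisation (Theorem \ref{chf}) to $Y\to X$ and to reduce, via Lemma \ref{del} applied to the intermediate flips and flops and induction on $dep(Y)$, to the case of a divisorial contraction to a point.
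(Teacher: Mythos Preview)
Your direct approach has a genuine circularity. In the intended application (and whenever $X\to W_0$ is small, so that $E$ is the \emph{only} $\psi_Y$-exceptional divisor), your crepant-pullback identity reads $-K_Y\cdot\Gamma_j=-b_1\,(E|_{H_Y})\cdot\Gamma_j=-b_1v_j$, with $b_1=\mathrm{mult}_C H_X=1$ since $H_X$ is generically smooth along $C$. Thus $K_Y\cdot\Gamma_j=v_j$, and substituting into $v_j=\tfrac1a(K_Y\cdot\Gamma_j-K_X\cdot g_*\Gamma_j)$ with $a=a(E,X)=1$ yields only $K_X\cdot g_*\Gamma_j=0$: vacuous when $\Gamma_j$ is $g$-exceptional, and for the unique $\Gamma_j$ dominating $C$ it reduces to bounding $K_X\cdot C$ --- but that is exactly Lemma~\ref{kcb}, whose proof \emph{uses} the present lemma. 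Concretely, your assertion that ``the intersection numbers $(E_k|_{H_Y})\cdot\Gamma_j$ of exceptional curves are bounded because $\bar\rho(H_0)\le n$'' conflates the entries $\Gamma_i\cdot\Gamma_j$ of $M$ (which are indeed bounded) with $(E|_{H_Y})\cdot\Gamma_j=\sum_i a_i\,\Gamma_i\cdot\Gamma_j=v_j$, which already involves the unknown coefficients $a_i$ you are trying to bound.

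Your fallback is the correct route and is precisely what the paper does: induct on $dep(Y)$ via the Chen--Hacon factorisation. The base case $dep(Y)=0$ (so $Y$ Gorenstein, hence $X$ smooth near $C$ by Cutkosky) is handled by Jaffe's classification of smooth curves through Du Val points. The inductive step is more delicate than your sketch suggests: one must separately bound $\delta_{H_{Y'}\to H_0}(F|_{H_{Y'}})$ for the exceptional divisor $F$ of the resulting contraction-to-a-point $Y'\to X$ (via the estimate $a(F,X)F^3\le 4$ from Kawakita's tables), propagate this bound back through the chain of flips using Lemma~\ref{del}, and treat the possible initial \emph{flop} $Z_1\dashrightarrow Z_2$ by a further ad hoc argument, since Lemma~\ref{del} does not apply to flops.
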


\begin{proof}
	We will prove the statement by induction on $dep(Y)$. Assume first that $dep(Y)=0$. In this case $Y$ is Gorenstein and $X$ is smooth near $C$ by \cite[Theorem 4]{cu}. Fix $P\in C$. If $H_X$ is smooth at $P$, then $H_Y\cong H_X$ near $P$. If $H_X$ is singular near $P$. Then the neighbourhood $(P\in C\subset H_X)$ is given by \cite[Theorem 1.1]{j}. Since $\bar{\rho}(H_X)\leq\bar{\rho}(H_0)\leq n$, there are only finitely many possibilities and, therefore, $\delta_{H_Y\rightarrow H_0}(E|_{H_Y})$ is bounded by an integer which depends only on $n$.\par
	In general we have a factorisation
	\[\vc{\xymatrix@C=0.8cm{Z_1\ar[d] \ar@{-->}[r] & ... \ar@{-->}[r] & Z_k\ar[d]\\ Y\ar[rd] & & Y'\ar[ld] \\ & X & }}\] as in Theorem \ref{chf}. We know that $Y'\rightarrow X$ is a divisorial contraction to a point and $Z_k\rightarrow Y'$ is a divisorial contraction which contracts $E_{Z_k}$ to $C'$, where $E_{Z_k}$ is the strict transforms of $E$ on $Z_k$ and $C'$ is the strict transform of $C$ on $Y$.
	\begin{cl}   if $E_{Z_i}$ and $H_{Z_i}$ are the strict transforms of $E$ and $H_X$ on $Z_i$ respectively, then for all $i=1,\dots,k$ we have that 
		$\delta_{H_{Z_i}\rightarrow H_0}(E_{Z_i}|_{H_{Z_i}})$ is bounded by an integer depending only on $n$.
	\end{cl}
	Assuming the claim,  since $\delta_{H_{Z_1}\rightarrow H_0}(E_{Z_1}|_{H_{Z_1}})$ is bounded by an integer depending only on $n$, it follows that $\delta_{H_Y\rightarrow H_0}(E|_{H_Y})$ is bounded by an integer depending only on $n$.
	\medskip
	
	We now prove the claim in several steps: 
	\begin{description}
	\item[Step 1] if $i>1$, or $i=1$ and $Z_1\dashrightarrow Z_2$ is a flip, then $\delta_{H_{Z_i}\rightarrow H_0}(E_{Z_i}|_{H_{Z_i}})$ is bounded by a number which depends only on $n$.
	\end{description}
	Indeed, by \cite[Lemma 3.4]{c} we know that $H_{Z_i}\rightarrow H_X$ is a partial resolution, hence $H_{Z_i}\rightarrow H_0$ is a partial resolution. It also follows by our assumptions and by Remark \ref{flpc} that $dep(Z_i)<dep(Y)\leq n$. Thus, $\delta_{H_{Z_k}\rightarrow H_0}(E_{Z_k}|_{H_{Z_k}})$ is bounded by the induction hypothesis. Now assume that $\delta_{H_{Z_{i+1}}\rightarrow H_0}(E_{Z_{i+1}}|_{H_{Z_{i+1}}})$ is bounded by an integer depending only on $n$, we want to show that $\delta_{H_{Z_i}\rightarrow H_0}(E_{Z_i}|_{H_{Z_i}})$ is also bounded. We know that $dep(Z_i)<n$, hence $\delta_{H_{Z_i}\rightarrow H_0}(E_{Z_i}|_{H_{Z_i}})$ is bounded by Lemma \ref{del}.
	\begin{description}
	\item[Step 2] if $Z_1\dashrightarrow Z_2$ is a flop and no flopped curve is  contained in $H_{Z_2}$,
	 then  $\delta_{H_{Z_1}\rightarrow H_0}(E_{Z_1}|_{H_{Z_1}})=\delta_{H_{Z_2}\rightarrow H_0}(E_{Z_2}|_{H_{Z_2}})$. 
	\end{description}
	Indeed, if $H_{Z_2}$ does not intersect the flopped curve of $Z_1\dashrightarrow Z_2$, then $\delta_{H_{Z_1}\rightarrow H_0}(E_{Z_1}|_{H_{Z_1}})=\delta_{H_{Z_2}\rightarrow H_0}(E_{Z_2}|_{H_{Z_2}})$. If $H_{Z_2}$ intersects the flopped curve, then since $H_{Z_2}\equiv_X-K_{Z_2}$, we know that $H_{Z_2}$ intersects the flopped curve trivially, so $H_{Z_2}$ contains the flopping curve.
	\begin{description}
	\item[Step 3] Let $F=exc(Y'\rightarrow X)$, then $\delta_{H_{Y'}\rightarrow H_0}(F|_{H_{Y'}})$ is bounded by a number which depends only on $n$.
	\end{description}
	Indeed, let $\Xi_1$, ..., $\Xi_m$ be the irreducible components of $F\cap H_{Y'}$, then for all $2\leq j\leq m$ we have that $\Xi_j\equiv\lambda_j\Xi_1$ for some positive rational number $\lambda_j$, where the numerical equivalence is intended as cycles in $Y'$. Moreover, by interchanging $\Xi_1$ and $\Xi_j$ for some $j$ we may assume that $\lambda_j\geq 1$ for all $j$. We can write
	\[F|_{H_{Y'}}=a_1\Xi_1+...+a_m\Xi_m\equiv(a_1+\lambda_2a_2+...+\lambda_ma_m)\Xi_1.\] Then
	\begin{align*}
	0<(a_1+...+a_m)(-F.\Xi_1)&\le (a_1+\lambda_2a_2+...+\lambda_ma_m)(-F.\Xi_1)\\
	&=-F^2.H_{Y'}\\
	&=F^2.K_{Y'}=a(F,X)F^3\leq 4
	\end{align*} where the last inequality follows from \cite[Table 1, Table 2]{k}. By Remark  \ref{flpc}, we have that $dep(Y')\leq n$. Notice that the Cartier index of $F$ divides the Cartier index of $K_{Y'}$ (\cite[Lemma 5.1]{ka}). Thus, Lemma \ref{bka} implies that the Cartier index of $F$ is bounded by $2n$. It follows that 
	\[\delta_{H_{Y'}\rightarrow H_0}(F|_{H_{Y'}})=a_1+...+a_m\leq 8n.\]
	\begin{description}	
	\item[Step 4] Let $F_{Z_j}$ be the strict transform of $F$ on $Z_j$ for $j=1,\dots,k$. Then $\delta_{H_{Z_j}\rightarrow H_0}(F_{Z_j}|_{H_{Z_j}})$ is bounded by a number which depends only on $n$  for any $j=2,\dots,k$.
	\end{description}
	 By Step 3, we know that $\delta_{H_{Y'}\rightarrow H_0}(F|_{H_{Y'}})$ is bounded. Since $\bar{\rho}(H_0)\leq n$, we have that both the singularities of $H_{Y'}$ and the birational morphism $\phi\colon H_{Z_k}\rightarrow H_{Y'}$ have only finitely many possibilities. We know that $F|_{H_{Y'}}$ is supported on the exceptional locus of $H_{Y'}\rightarrow H_0$, hence
	since $F_{Z_k}|_{H_{Z_k}}\le \phi^*(F|_{H_{Y'}})$, it follows that  
	  $\delta_{H_{Z_k}\rightarrow H_0}(F_{Z_k}|_{H_{Z_k}})$ is bounded by an integer which depends only on $n$. 
	 Now since $Z_j\dashrightarrow Z_{j+1}$ are all flips for $j>1$, $\delta_{H_{Z_j}\rightarrow H_0}(F_{Z_j}|_{H_{Z_j}})$ is bounded by Lemma \ref{del}, for all $j>1$.
	 \begin{description}	
	 \item[Step 5] If $Z_1\dashrightarrow Z_2$ is a flop, then $\delta_{H_{Z_1}\rightarrow H_0}(E_{Z_1}|_{H_{Z_1}})$ is bounded by a number which depends only on $n$.
	 \end{description}
	 By Step 1 we know that $\delta_{H_{Z_2}\rightarrow H_0}(E_{Z_2}|_{H_{Z_2}})$ is bounded. Thus, by Step 2, we may assume that there exists a flopped curve 
	$C_{Z_2}\subset Z_2$ which is contained in $H_{Z_2}$.
	 Then since $\delta_{H_{Z_2}\rightarrow H_0}(E_{Z_2}|_{H_{Z_2}})$ is bounded, it follows that $E_{Z_2}\cdot C_{Z_2}$ is bounded. Step 4 implies that $F_{Z_2}\cdot C_{Z_2}$ is also bounded. Let $Z_1\to V$ and $Z_2\to V$ be the flopping contractions. 
	 Then $E_{Z_2}\equiv_V\lambda F_{Z_2}$ for some rational number $\lambda$ which is bounded. It follows that $E_{Z_1}\equiv_V\lambda F_{Z_1}$.\par
	 Now let $C_{Z_1}$ be a flopping curve of $Z_1\dashrightarrow Z_2$ and let $C_Y$ be the image of the $C_{Z_1}$ on $Y$. Then
	 \[ K_Y\cdot C_Y+a(F_{Z_1},Y)F_{Z_1}\cdot C_{Z_1}=K_{Z_1}\cdot C_{Z_1}=0.\] 
	  By \cite[Theorem 0]{b}, we know that $0>K_Y\cdot C_Y>-1$. Moreover, $a(F_{Z_1},Y)=1/r$ where $r$ is the Cartier index of $K_Y$ near a singular point of $Y$. It follows that $0<F_{Z_1}\cdot C_{Z_1}<r\leq 2n$ by Lemma \ref{bka}. Since $E_{Z_1}\equiv_V\lambda F_{Z_1}$ for some bounded $\lambda$ and since the Cartier index of $F_{Z_1}$ is bounded, it follows  that $E_{Z_1}\cdot C_{Z_1}$ is bounded by an integer which depends only on $n$. Using the same argument as in the proof of Lemma \ref{del} (3) (the flopping locus of $Z_1\dashrightarrow Z_2$ is irreducible by \cite[Lemma 3.11]{c}), it follows that $\delta_{H_{Z_1}\rightarrow H_0}(E_{Z_1}|_{H_{Z_1}})$ is bounded.\par
	  \medskip
	  
	 Finally, Step 1 and Step 5 imply the claim.
\end{proof}

\begin{lem}\label{kcb} Fix a positive integer $n$.
	Let $X\dashrightarrow X'$ be a three-dimensional terminal flip over $W$ such that $dep(X)=n$. 
	
	Then  for any flipped curve $C'\subset X'$, we have that $K_{X'}\cdot C'$ is bounded by an integer which depends only on $n$.
\end{lem}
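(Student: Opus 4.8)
The plan is to reduce the three–dimensional intersection number $K_{X'}\cdot C'$ to a computation on a Du Val surface and then to bound the resulting surface intersection by means of Lemmas \ref{del} and \ref{ecb}, applied along the Chen--Hacon factorisation of the flip. First I would pass to an analytic neighbourhood of the image $Q\in W$ of the flipping locus and choose a birational morphism $W\to W_0$ together with a Du Val section $H_0\in|-K_{U_0}|$, exactly as in the hypotheses of Lemma \ref{del}; here one must use that $K_W$ need not be $\Q$-Cartier, so the section is produced on a suitable partial resolution $W_0$ of $W$ rather than on $W$ itself. Writing $H$ and $H'$ for the proper transforms of $H_0$ on $X$ and $X'$, the same argument as in the proof of Lemma \ref{del} shows that $H'\in|-K_{X'}|$ locally and that any flipped curve $C'$ is one of the exceptional curves $\Gamma'_k$ of the partial resolution $H'\to H_0$; consequently $K_{X'}\cdot C'=(K_{X'}|_{H'})\cdot\Gamma'_k$ becomes an intersection number on the Du Val surface $H'$.

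Next I would bound the data attached to $H_0$. Since $H_0$ is a Du Val section, its singularities are the Du Val singularities cut out by the terminal singularities lying on the flipping locus, and by Lemma \ref{bka} the basket data of these points -- hence their number and type -- are bounded in terms of $dep(X)=n$. This bounds $\bar{\rho}(H_0)$ by a function of $n$ and therefore bounds the entire intersection matrix $(\Gamma'_i\cdot\Gamma'_j)$. Thus $K_{X'}\cdot C'$ will be controlled once the coefficients of the relevant divisor along the exceptional curves $\Gamma'_i$ -- that is, a quantity of the form $\delta_{H'\to H_0}$ -- are bounded.

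To produce this last bound I would apply Theorem \ref{chf} to factor $X\dashrightarrow X'$ as $Y_1\dashrightarrow\cdots\dashrightarrow Y_k$ with $Y_1\to X$ a $w$-morphism and $\pi\colon Y_k\to X'$ a divisorial contraction with exceptional divisor $E$. Using $dep(Y_k)<dep(X)=n$ from Remark \ref{flpc}, I would apply Lemma \ref{ecb} to $\pi$ in order to bound $\delta_{H_{Y_k}\to H_0}(E|_{H_{Y_k}})$, and then propagate this bound through the intermediate flips and flops $Y_i\dashrightarrow Y_{i+1}$ down to $X'$ by repeated use of Lemma \ref{del}. Concretely, one tracks a contracted divisor $D'$ on $X'$ built from $E$; since the relative Picard number of the flip over $W$ is one, $D'\equiv_W\lambda K_{X'}$ for some rational number $\lambda$, and Lemma \ref{del}(1) bounds $|D'\cdot C'|$, whence $K_{X'}\cdot C'=\lambda^{-1}(D'\cdot C')$ is bounded provided that $\lambda$ is bounded away from zero.

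The main obstacle is twofold. The first difficulty is the construction at the start: because $K_W$ is not $\Q$-Cartier, one must choose $W_0$, the section $H_0$, and the tracked divisor $D'$ so that all the hypotheses of Lemmas \ref{del} and \ref{ecb} are simultaneously satisfied -- in particular that $\pi$ can be arranged to contract its exceptional divisor to a smooth curve, so that Lemma \ref{ecb} applies. The second, and more serious, difficulty is the final conversion: a bound on the exceptional multiplicities $\delta$ together with the bounded intersection matrix controls $D'\cdot C'$, but to deduce a bound on $K_{X'}\cdot C'$ itself one must control the proportionality factor $\lambda$ and keep it bounded away from zero. This is exactly the point where the bound $dep(X)=n$ and the Cartier index estimate of Lemma \ref{bka} must be fed in, since a bounded Cartier index for $K_{X'}$ forces $\lambda$ to lie in a bounded discrete set and prevents it from degenerating.
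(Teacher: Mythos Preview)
Your outline assembles several of the right ingredients—the Du Val section, the bound on $\bar\rho(H_0)$ via Lemma \ref{bka}, the Chen--Hacon factorisation, and the role of Lemma \ref{ecb}—but the logical architecture has a genuine gap.

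The central problem is the object $D'$ in your third paragraph. The map $Y_k\to X'$ \emph{contracts} $E$, so there is no divisor on $X'$ ``built from $E$''; a flip is small, and $X'\to W$ has no exceptional divisors at all. Hence hypothesis (ii) of Lemma \ref{del} fails for the flip $X\dashrightarrow X'$ itself, the relation $D'\equiv_W\lambda K_{X'}$ has no content, and the whole ``$\lambda$ bounded away from zero'' discussion is a symptom of this confusion rather than a real obstacle.

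What is missing is the \emph{induction on $n=dep(X)$}. In the factorisation $Y_1\dashrightarrow\cdots\dashrightarrow Y_k$ each intermediate flip has $dep(Y_i)<n$, so by the inductive hypothesis together with Corollary \ref{ntl2} one already knows that $K_{Y_k}\cdot C_{Y_k}$ is bounded whenever $C_{Y_k}$ is the strict transform of a flipped curve of some $Y_i\dashrightarrow Y_{i+1}$. The argument then splits according to whether $Y_k\to X'$ contracts $E$ to a point or to the curve $C'$; you have not made this case distinction, and Lemma \ref{ecb} only applies in the second case. If the centre is a point, the strict transform $C_{Y_k}$ of $C'$ satisfies $K_{X'}\cdot C'\le K_{Y_k}\cdot C_{Y_k}$, and one concludes directly by induction. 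If the centre is $C'$, Lemma \ref{ecb} bounds $\delta_{H_{Y_k}\to H_W}(E|_{H_{Y_k}})$, hence $|E\cdot C_{Y_k}|$ for the unique component of $E\cap H_{Y_k}$ dominating $C'$, and then $K_{X'}\cdot C'=K_{Y_k}\cdot C_{Y_k}-E\cdot C_{Y_k}$ is bounded by combining this with the inductive bound (or the trivial bound if $K_{Y_k}\cdot C_{Y_k}\le 0$). Without the induction you have no way to control $K_{Y_k}\cdot C_{Y_k}$, and no amount of propagating $\delta$-bounds through the $Y_i$ fills this hole.

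Two smaller points. First, there is no need for an auxiliary $W_0$: one works directly with a general $H_W\in|-K_W|$, which is Du Val by \cite[Theorem 2.2]{km} as a statement about the Weil divisor class, so the failure of $K_W$ to be $\Q$-Cartier is irrelevant. Second, that result of Koll\'ar--Mori is stated for an \emph{irreducible} extremal neighbourhood, so before anything else one must reduce to the case of irreducible flipping locus by running an analytic $K_X$-MMP over $W$ and using Corollary \ref{ntl2}; this reduction is absent from your outline.
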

\begin{proof}
	If there is more than one flipping curve on $X$, then we can run an analytic $K_X$-MMP over $W$. 
	Indeed, we can decompose the map $X\dashrightarrow X'$ into a  sequence of analytic flips 
	\[X=X_1\dashrightarrow X_2\dashrightarrow ...\dashrightarrow X_{k-1}\dashrightarrow X_k=X'\]
such that the flipping locus of $X_i\dashrightarrow X_{i+1}$ is irreducible for all $i$ and any flipped curve of $X\dashrightarrow X'$ is the proper transform of a flipped curve of $X_i\dashrightarrow X_{i+1}$ on $X'$ for some $i$. By Corollary \ref{ntl2}, we only need to prove that the statement holds for the flip $X_i\dashrightarrow X_{i+1}$ for all $i$. Thus, we may assume that the flipping locus of $X\dashrightarrow X'$ is irreducible. 
In this case, a general member $H_W\in |-K_W|$ has Du Val singularities by \cite[Theorem 2.2]{km}. Moreover, the singularities of $H_W$ depend only on the Cartier indices and the axial multiplicities of the singular points on $X$. By Lemma \ref{bka}, it follows that $\bar{\rho}(H_W)$ is bounded by an integer $N(n)$ which depends only on $n$. One can also assume that $N(n)\geq n$.\par
	Consider the diagram\[\vc{\xymatrix@C=0.8cm{Y_1\ar[d] \ar@{-->}[r] & ... \ar@{-->}[r] & Y_k\ar[d]\\ X\ar[rd] & & X'\ar[ld] \\ & W & }}\] as in Theorem \ref{chf}. By Remark \ref{flpc} we know that $dep(Y_i)<n$ for all $i$. By induction on $n$, we may assume that if $Y_i\dashrightarrow Y_{i+1}$ is a flip, then for any flipped curve $C_{i+1}\subset Y_{i+1}$ we have that $K_{Y_{i+1}}\cdot C_{i+1}$ is bounded by an integer depending only on $n$. Let $E=exc(Y_k\rightarrow X')$. 
	
	\medskip 
	
We distinguish two cases:
	\begin{description}
	\item[Case 1] $C'$ is not contained in the centre of $E$ on $X'$.
	\end{description}\par
	Let $C_{Y_k}$ be the proper transform of $C'$ on $Y_k$. Then \[K_{Y_k}\cdot C_{Y_k}=K_{X'}\cdot C'+a(E,X')E\cdot C_{Y_k}\geq K_{X'}\cdot C'>0.\] This means that $C_{Y_k}$ is the proper transform of a flipped curve of $Y_i\dashrightarrow Y_{i+1}$ on $Y_k$ for some $i$. By the induction hypothesis and Corollary \ref{ntl2}, we know that $K_{Y_k}\cdot C_{Y_k}$ is bounded by an integer depending only on $n$, hence so is $K_{X'}\cdot C'$.
	\begin{description}
	\item[Case 2] $Y_k\rightarrow X'$ is a divisorial contraction to $C'$.
	\end{description}\par
	Notice that $C'$ is a smooth curve by \cite[Corollary 3.3]{c} and if  $H_{Y_k}$ is the proper transform of $H_W$ on $Y_k$ then 
	$H_{Y_k}\rightarrow H_W$ is a partial resolution by \cite[Lemma 3.4]{c}. Recall that $N(n)$ is an integer which is greater than both $n$ and $\bar{\rho}(H_W)$. By Lemma \ref{ecb} we know that $\delta_{H_{Y_k}\rightarrow H_W}(E|_{H_{Y_k}})$ is bounded by an integer which depends only on $N(n)$. Notice that $Y_k\rightarrow X'$ is generically a blow-up along $C'$. Since $H_{Y_k}\rightarrow H_W$ is a partial resolution, if we denote by $H'$ the strict transform of $H_W$ on $X'$ then  $\mmul_{C'}H'=1$ and, in particular,  there is exactly one component of $E\cap H_{Y_k}$ which maps surjectively to $C'$. Let $C_{Y_k}$ be this component, then $|E\cdot C_{Y_k}|$ is bounded by an integer which depends only on $N(n)$ since $\delta_{H_{Y_k}\rightarrow H_W}(E|_{H_{Y_k}})$ is bounded.\par
	Now $K_{Y_k}\cdot C_{Y_k}=K_{X'}\cdot C'+E\cdot C_{Y_k}$. If $K_{Y_k}\cdot C_{Y_k}\leq 0$, then $K_{X'}\cdot C'\leq -E\cdot C_{Y_k}$ is bounded. If $K_{Y_k}\cdot C_{Y_k}>0$, then $C_{Y_k}$ is the proper transform of a flipped curve of $Y_i\dashrightarrow Y_{i+1}$ for some $i$. It follows that $K_{Y_k}\cdot C_{Y_k}$ is bounded by the induction hypothesis and by Corollary \ref{ntl2}. This implies that $K_{X'}\cdot C'$ is bounded.
\end{proof}

\begin{lem}\label{e3b}
Let $f\colon Y\rightarrow X$ be a divisorial contraction between projective terminal threefolds which contracts a divisor $E$ to a smooth curve
	$C$. 
	
	Then $|K_Y^3-K_X^3|$ is bounded by an integer which depends only on $g(C)$, $K_X\cdot C$ and the basket data of $Y$.
\end{lem}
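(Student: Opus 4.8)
The plan is to express $K_Y^3-K_X^3$ through the geometry of the exceptional surface $E$ and to reduce everything to intersection numbers on $E$. Write $K_Y=f^\ast K_X+aE$, where $a=a(E,X)>0$ since $X$ is terminal and $E$ is $f$-exceptional. Cubing and using the projection formula together with the fact that $f$ contracts $E$ to the curve $C$ (so that $(f^\ast K_X)^2\cdot E=K_X^2\cdot f_\ast[E]=0$), one gets
\[
K_Y^3-K_X^3=3a^2\,(f^\ast K_X\cdot E^2)+a^3\,E^3 .
\]
Hence it is enough to bound the three quantities $a$, $f^\ast K_X\cdot E^2$ and $E^3$.

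Let $\pi=f|_E\colon E\to C$, whose general fibre $F$ is a smooth rational curve with $F^2=0$. As $f^\ast K_X$ is trivial on the fibres of $\pi$, we have $(f^\ast K_X)|_E\equiv (K_X\cdot C)\,F$ and $f^\ast K_X\cdot F=0$; by adjunction on the surface $E$ we have $K_E\cdot F=-2$. From $K_E=(K_Y+E)|_E=(f^\ast K_X)|_E+(a+1)E|_E$ and intersecting with $F$ we obtain $(a+1)(E\cdot F)=-2$, that is $E\cdot F=-2/(a+1)$, and therefore
\[
f^\ast K_X\cdot E^2=(K_X\cdot C)(E\cdot F)=-\frac{2(K_X\cdot C)}{a+1}.
\]
To bound $a$, note that $E$ is $\mathbb Q$-Cartier with index $r$ bounded in terms of $\B(Y)$ by Lemma \ref{bka}; thus $r(E\cdot F)$ is a nonzero integer, so $|E\cdot F|\ge 1/r$ and $a+1\le 2r$. (In fact $a=1$, because over the generic point of $C$ the variety $X$ is smooth and $f$ is the blow-up of $C$, but the weaker bound suffices.)

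It remains to bound $E^3$, which is the main obstacle. Squaring $K_E=(f^\ast K_X)|_E+(a+1)E|_E$ and using the numbers above gives
\[
K_E^2=-4(K_X\cdot C)+(a+1)^2\,E^3,\qquad\text{so}\qquad E^3=\frac{K_E^2+4(K_X\cdot C)}{(a+1)^2},
\]
and the task becomes to bound $K_E^2$ in terms of $g(C)$ and $\B(Y)$. Since $\pi_\ast\Oo_E=\Oo_C$ and $R^1\pi_\ast\Oo_E=0$, we have $\chi(\Oo_E)=1-g(C)$. Moreover, if $Y$ is smooth over a point $Q\in C$ then $f$ is, locally over $Q$, the blow-up of the smooth curve $C$ in the smooth locus of $X$, and in particular $X$ is smooth at $Q$; consequently every singular point of $E$ and every degenerate fibre of $\pi$ lies over a point of $f(\mathrm{Sing}\,Y)$. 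The number of such points is at most $\#\B(Y)$, and their analytic types are controlled by the basket of $Y$. Passing to the minimal resolution $\mu\colon\tilde E\to E$, the difference $K_E^2-K_{\tilde E}^2$ depends only on the quotient-singularity types of $E$ and is thus bounded in terms of $\B(Y)$; and Noether's formula gives $K_{\tilde E}^2=12\bigl(1-g(C)\bigr)-e(\tilde E)$, where $e(\tilde E)=2\bigl(2-2g(C)\bigr)+\sum(\text{local contributions over }f(\mathrm{Sing}\,Y))$ is bounded in terms of $g(C)$ and $\B(Y)$. Combining these bounds yields the desired estimate for $K_Y^3-K_X^3$.

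The technical heart of the argument is the local analysis at the points of $f(\mathrm{Sing}\,Y)$: one must show that the degenerate fibre of $\pi$ and the surface singularities of $E$ lying over such a point belong to a finite list determined by the terminal singularity type of $Y$ there, so that each contributes only a bounded correction to $K_E^2$. I would extract this either from the explicit classification of three-dimensional terminal divisorial contractions to a curve, or inductively via the Chen--Hacon factorisation of Theorem \ref{chf}, in the spirit of the proof of Lemma \ref{ecb}.
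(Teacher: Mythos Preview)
Your approach is natural but genuinely different from the paper's, and it contains a real gap at the step you yourself flag as the ``technical heart''.

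The paper bypasses all local geometry of $E$ by applying the singular Riemann--Roch formula to the sequences $0\to\Oo_Y(-E)\to\Oo_Y\to\Oo_E\to0$ and $0\to\Oo_Y(K_Y-E)\to\Oo_Y(K_Y)\to\Oo_E(K_Y)\to0$. Combined with Kawamata--Viehweg vanishing (so that $\chi(\Oo_E)=\chi(\Oo_C)$ and $\chi(\Oo_E(K_Y))=0$), this yields the \emph{exact} formula $E^3=K_X\cdot C+2\chi(\Oo_C)+2(\theta_2-\theta_1)$, where $\theta_1,\theta_2$ are the basket correction terms in Reid's formula. These take only finitely many values once $\B(Y)$ is fixed, because the local integers $i_P$ with $\Oo_Y(-E)\cong\Oo_Y(i_PK_Y)$ near $P$ range over $\{0,\dots,r_P-1\}$. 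Boundedness falls out immediately.

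Your route via Noether's formula on a resolution $\tilde E\to E$ requires instead that the singularities of $E$ and the degenerate fibres of $\pi$ belong to a finite list controlled by $\B(Y)$. But the basket records only the deformation of each point of $\mathrm{Sing}(Y)$ to cyclic quotient points; it does not determine the analytic type of the singularity, let alone how the particular divisor $E$ sits inside it. So the claim that the local corrections to $K_E^2-K_{\tilde E}^2$ and to $e(\tilde E)$ are bounded in terms of $\B(Y)$ alone is exactly what needs proof, and you have not supplied it. Extracting it from the classification of terminal divisorial contractions to curves, or by a Chen--Hacon induction as in Lemma~\ref{ecb}, may well be feasible, but it is substantial work that is absent from your argument. (You would also need that $E$ has rational singularities to identify $\chi(\Oo_{\tilde E})$ with $\chi(\Oo_E)=1-g(C)$.) The Riemann--Roch argument is both shorter and avoids this difficulty entirely.
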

\begin{proof}
	Since $Y\rightarrow X$ is generically a blow-up along $C$, we know that $a(E,X)=1$. Hence \[K_Y^3-K_X^3=3E^2\cdot f\st K_X+E^3=-3K_X\cdot C+E^3.\] Thus, we only need to bound $|E^3|$.\par
	Consider the following two exact sequences \[ 0\rightarrow\Oo_Y(-E)\rightarrow\Oo_Y\rightarrow\Oo_E\rightarrow0\] and
	\[ 0\rightarrow\Oo_Y(K_Y-E)\rightarrow\Oo_Y(K_Y)\rightarrow\Oo_E(K_Y)\rightarrow0.\]
The singular Riemann-Roch formula (cf. Section \ref{srr}) yields 
\[\chi(\Oo_E)=\chi(\Oo_Y)-\chi(\Oo_Y(-E))=
\frac{1}{12}(-5K_X\cdot C+6E^3)+\frac{1}{12}E\cdot c_2(Y)+\theta_1\]
and 
\[\chi(\Oo_E(K_Y))=\chi(\Oo_Y(K_Y))-\chi(\Oo_Y(K_Y-E))=
\frac{1}{12}K_X\cdot C+\frac{1}{12}E\cdot c_2(Y)+\theta_2\] where $\theta_1$ and $\theta_2$ are constants which depend only on the basket data of $Y$ and the integer $i_P$ such that $-E\sim i_PK_Y$ at any  singular point $P\in Y$.
 On the other hand, 
by the  Kawamata-Viehweg vanishing theorem (cf. \cite[Theorem 1-2-3]{kmm}) we have that $R^kf\ts(\Oo_Y(iK_Y-jE))=0$ if $k\geq 1$ and $i-1-j\leq0$.
By considering the exact sequence \[0\rightarrow\Oo_Y(-E)\rightarrow\Oo_Y\rightarrow\Oo_E\rightarrow0\] we know that $R^kf\ts(\Oo_E)=0$ for $k\geq 1$. Similarly, by considering the exact sequence \[0\rightarrow\Oo_Y(K_Y-E)\rightarrow\Oo_Y(K_Y)\rightarrow\Oo_E(K_Y)\rightarrow0\] we know that $R^kf\ts(\Oo_E(K_Y))=0$ for $k\geq 1$. Thus, $\chi(\Oo_E)=\chi(f\ts\Oo_E)=\chi(\Oo_C)$ and $\chi(\Oo_E(K_Y))=\chi(f\ts(\Oo_E(K_Y)))$. Note that the push-forward of the second exact sequence is 
	\[0\rightarrow \Oo_X(K_X)\rightarrow\Oo_X(K_X)\otimes f\ts\Oo_Y(E)=\Oo_X(K_X)\rightarrow f\ts(\Oo_E(K_Y))\rightarrow0,\]
which implies $\chi(\Oo_E(K_Y))=\chi(f\ts(\Oo_E(K_Y)))=0$, or $\frac{1}{12}E\cdot c_2(Y)=-\frac{1}{12}K_X\cdot C-\theta_2$. Thus,
	\[ \chi(\Oo_C)=\chi(\Oo_E)=\frac{1}{2}(-K_X\cdot C+E^3)+\theta_1-\theta_2,\] or \[ E^3=K_X\cdot C+2\chi(\Oo_C)+2(\theta_2-\theta_1).\]
Since $\theta_1$ and $\theta_2$ take only finitely many possible value if the basket data of $Y$ is given, $|E^3|$ is bounded by an integer which depends only on $g(C)$, $K_X\cdot C$ the basket data of $Y$.
\end{proof}
\begin{lem}\label{bdp}
	Fix a positive integer $n$. Let $Y\rightarrow X$ be a divisorial contraction to a point between terminal threefolds such that $dep(Y)=n$. 
	
	Then $|K_Y^3-K_X^3|$ is bounded by an integer which depends only on $n$. 
\end{lem}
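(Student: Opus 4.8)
The plan is to reduce the difference $K_Y^3-K_X^3$ to the single quantity $a^3E^3$, where $E$ is the contracted divisor and $a\coloneqq a(E,X)$, and then to bound $a$ and $E^3$ by means of the singular Riemann--Roch formula. First I would record the basic identity: writing $f\colon Y\to X$ for the contraction and $K_Y=f\st K_X+aE$, the projection formula gives $(f\st K_X)^2\cdot E=f\st K_X\cdot E^2=0$ because $E$ is contracted to the point $P$, so that expanding the cube yields
\[
K_Y^3-K_X^3=a^3E^3.
\]
Next I would collect the boundedness of the local invariants. By the depth estimate used in the proof of Lemma \ref{depr} we have $dep(X)\le dep(Y)+1=n+1$, so Lemma \ref{bka} shows that the baskets of $X$ and $Y$ lie in finite sets depending only on $n$ and that the Cartier indices $r_X,r_Y$ are at most $2(n+1)$. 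Since $r_XK_X$ is Cartier near $P$, the product $r_Xa$ is a positive integer; in particular $a\ge 1/r_X$, and moreover $E^3\ge r_Y^{-3}>0$ since $r_YE$ is Cartier near every point and $-E$ is $f$-ample.

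The heart of the argument is a Riemann--Roch computation on the surface $E$, in the spirit of Lemma \ref{e3b}. Applying the formula of \S\ref{srr} on $Y$ to the exact sequence $0\to\Oo_Y(-E)\to\Oo_Y\to\Oo_E\to0$, and using $f\st K_X\cdot E^2=(f\st K_X)^2\cdot E=0$ to rewrite the cubic term via $K_Y\cdot E^2=aE^3$ and $K_Y^2\cdot E=a^2E^3$, I would obtain
\[
\chi(\Oo_E)=\tfrac{1}{12}(a^2+3a+2)E^3+\tfrac{1}{12}E\cdot c_2(Y)+\theta,
\]
where $\theta$ depends only on the basket of $Y$ and on the residues $i_P$, which range over finite sets; in particular $\theta$ lies in a finite set depending only on $n$. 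Two inputs then pin down the remaining terms. On the one hand, since $X$ is terminal it has rational singularities and $-E$ is $f$-ample, so relative Kawamata--Viehweg vanishing gives $R^if\ts\Oo_Y(-E)=0$ for $i>0$; together with $R^if\ts\Oo_Y=0$ this forces $R^if\ts\Oo_E=0$ for $i>0$ and $f\ts\Oo_E=\Cc_P$, whence $\chi(\Oo_E)=1$. On the other hand, comparing $\chi(\Oo_Y(mf\st A))$ with $\chi(\Oo_X(mA))$ for an ample Cartier divisor $A$ on $X$ (these agree for all $m$ by rational singularities) and matching the coefficients of $m$ yields $f\st K_X\cdot c_2(Y)=K_X\cdot c_2(X)$; hence
\[
a\,(E\cdot c_2(Y))=K_Y\cdot c_2(Y)-K_X\cdot c_2(X)=\sum_{\B(Y)}\left(r-\tfrac1r\right)-\sum_{\B(X)}\left(r-\tfrac1r\right)
\]
by the identity of \S\ref{srr} and $\chi(\Oo_Y)=\chi(\Oo_X)$, so $a\,(E\cdot c_2(Y))$ is bounded in terms of $n$.

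Substituting $\chi(\Oo_E)=1$ and $E\cdot c_2(Y)=\beta/a$ (with $\beta$ bounded) into the displayed Riemann--Roch relation leaves the single equation $(a^2+3a+2)E^3=\gamma-\beta/a$ with $\gamma$ bounded. The key point---and the step I expect to be most delicate---is then to bound the discrepancy $a$: since $a\ge 1/r_X$, the term $\beta/a$ is bounded in absolute value, so the right-hand side is bounded; as $a^2E^3\le(a^2+3a+2)E^3$ and $E^3\ge r_Y^{-3}$, this gives a bound on $a^2$, hence on $a$, depending only on $n$. Once $a$ is bounded, the same equation (whose coefficient $a^2+3a+2\ge 2$ is bounded below) bounds $E^3$, and therefore $a^3E^3=K_Y^3-K_X^3$ is bounded by an integer depending only on $n$, as required. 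The subtleties I would take care over are the vanishing statements yielding $\chi(\Oo_E)=1$, the Chern-class identity $f\st K_X\cdot c_2(Y)=K_X\cdot c_2(X)$, and the bookkeeping of the basket contributions $\theta$; the genuinely non-obvious move is deriving the bound on the discrepancy from the Riemann--Roch relation together with the lower bound $a\ge 1/r_X$.
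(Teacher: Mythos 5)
Your argument is correct, but it takes a genuinely different route from the paper's. After the same reduction $K_Y^3-K_X^3=a^3E^3$, the paper's proof is essentially two lines: it quotes Kawakita's classification of threefold divisorial contractions to points, whose tables (\cite[Table 1, Table 2]{k}) give the uniform bound $0<aE^3\leq 4$; combined with $E^3\geq r^{-3}$ (with $r$ the Cartier index of $Y$) this yields $a\leq 4r^3$, hence $0<a^3E^3\leq 64r^6$, and $r$ is bounded by Lemma \ref{bka}. You instead run the singular Riemann--Roch computation on $E$ --- in effect transplanting the paper's own proof of Lemma \ref{e3b} from the curve case to the point case --- and extract the bound on $a$ from the relation $(a^2+3a+2)E^3=\gamma-\beta/a$ together with the a priori lower bounds $a\geq 1/r_X$ and $E^3\geq r_Y^{-3}$. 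The auxiliary inputs you flag do all check out: $dep(X)\leq dep(Y)+1$ is exactly the inequality from \cite[Proposition 2.15]{ch} used in Lemma \ref{depr}, and it is what controls the basket and Cartier index of $X$; the relative Kawamata--Viehweg vanishing for the $\Q$-Cartier Weil divisor $-E$ is the same form of vanishing the paper invokes in Lemma \ref{e3b}; $f\ts\Oo_Y(-E)$ is the ideal sheaf of the reduced point $P$ because $E$ is a prime divisor mapping onto $P$, so $\chi(\Oo_E)=1$; and the identity $f\st K_X\cdot c_2(Y)=K_X\cdot c_2(X)$ follows by comparing Riemann--Roch for $\Oo_Y(mf\st A)$ and $\Oo_X(mA)$ as you describe. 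What the paper's route buys is brevity and an explicit constant; what yours buys is independence from Kawakita's classification, at the cost of a longer computation --- a trade-off worth noting, since the quoted bound $aE^3\leq 4$ encapsulates a substantial amount of case analysis.
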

\begin{proof}
	Let $E$ be the exceptional divisor. Then $K_Y^3-K_X^3=a^3E^3$ where $a=a(E,X)$. By \cite[Table 1, Table 2]{k} we know that $0<aE^3\leq 4$. Let $r$ be the Cartier index of $Y$, then $E^3\ge \frac{1}{r^3}$, hence $a\le 4r^3$. Thus, $0<a^3E^3\leq 64r^6$. Since $r$ is bounded by an integer which depends only on $n$ by Lemma \ref{bka}, we have that  $|a^3E^3|=|K_Y^3-K_X^3|$ is bounded by an integer which depends only on $n$.
\end{proof}

\begin{pro}\label{bk3}
	Fix a positive integer $n$. Let $X\dashrightarrow X'$ be a three-dimensional terminal flip such that $dep(X)=n$. 
	
	Then $|K_X^3-K_{X'}^3|$ is bounded by an integer which depends only on $n$
\end{pro}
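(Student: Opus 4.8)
The plan is to argue by induction on $n=dep(X)$, reducing the flip to the elementary pieces produced by the Chen--Hacon factorisation of Theorem \ref{chf}. Applying that result to the flip $X\dashrightarrow X'$ over $W$ yields a diagram
\[\vc{\xymatrix@C=0.8cm{Y_1\ar[d] \ar@{-->}[r] & ... \ar@{-->}[r] & Y_k\ar[d]\\ X\ar[rd] & & X'\ar[ld] \\ & W & }}\]
in which $Y_1\to X$ is a $w$-morphism, $Y_k\to X'$ is a divisorial contraction, $Y_1\dashrightarrow Y_2$ is a flip or a flop, and each $Y_i\dashrightarrow Y_{i+1}$ with $i>1$ is a flip. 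Writing
\[
K_X^3-K_{X'}^3=\big(K_X^3-K_{Y_1}^3\big)+\sum_{i=1}^{k-1}\big(K_{Y_i}^3-K_{Y_{i+1}}^3\big)+\big(K_{Y_k}^3-K_{X'}^3\big),
\]
it suffices to bound each summand together with the number $k+1$ of summands. The latter is immediate from Remark \ref{flpc}, which gives $k\le dep(X)+1=n+1$; and since every flip appearing in the factorisation has strictly smaller depth, the induction is well-founded, the base case (no genuine flip occurring) being covered by the remaining estimates.

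For the first summand, $Y_1\to X$ is a divisorial contraction to a point with $dep(Y_1)=dep(X)-1=n-1$ by Remark \ref{flpc}, so Lemma \ref{bdp} bounds $|K_X^3-K_{Y_1}^3|$ in terms of $n-1$, hence of $n$. For the middle summands I separate the flop from the flips. A flop is crepant, so on a common resolution $Z\to Y_i$, $Z\to Y_{i+1}$ the pullbacks of $K_{Y_i}$ and $K_{Y_{i+1}}$ coincide; consequently $K_{Y_i}^3=K_{Y_{i+1}}^3$ and the corresponding summand vanishes. Each genuine flip $Y_i\dashrightarrow Y_{i+1}$ satisfies $dep(Y_i)\le n-1<n$ by Remark \ref{flpc}, so the induction hypothesis bounds $|K_{Y_i}^3-K_{Y_{i+1}}^3|$ in terms of $dep(Y_i)<n$.

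It remains to treat the final divisorial contraction $Y_k\to X'$, which I expect to be the main obstacle. If it contracts a divisor to a point, then $dep(Y_k)<n$ and Lemma \ref{bdp} applies as before. If instead it contracts a divisor $E$ to a curve $C'$, I invoke Lemma \ref{e3b}, whose bound depends on $g(C')$, on $K_{X'}\cdot C'$, and on the basket data of $Y_k$. The basket data is controlled by Lemma \ref{bka} since $dep(Y_k)<n$. The curve $C'$ lies in the flipped locus $\mathrm{Exc}(X'\to W)$, so it is a flipped curve; it is smooth by \cite[Corollary 3.3]{c} and rational, whence $g(C')=0$, while $|K_{X'}\cdot C'|$ is bounded in terms of $n$ by Lemma \ref{kcb}. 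Thus every summand is bounded by a constant depending only on $n$, and since there are at most $n+2$ of them, so is $|K_X^3-K_{X'}^3|$. The crux of the argument is therefore the combination of Lemma \ref{kcb} and Lemma \ref{e3b}: the former, resting on the delicate analysis of Lemma \ref{ecb} and Lemma \ref{del} through Du Val sections, is what makes the intersection number $K_{X'}\cdot C'$ controllable, and hence supplies the only non-formal input to the telescoping estimate.
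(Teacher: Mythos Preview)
Your proposal is correct and follows essentially the same approach as the paper: both argue by induction on $dep(X)$ via the Chen--Hacon factorisation, telescope $K^3$ along the chain $X\leftarrow Y_1\dashrightarrow\cdots\dashrightarrow Y_k\rightarrow X'$, handle the endpoint divisorial contractions with Lemma~\ref{bdp} (point case) and the combination of Lemmas~\ref{kcb}, \ref{bka} and~\ref{e3b} (curve case, using that $C'$ is a smooth rational flipped curve), treat the flop as crepant, and invoke the induction hypothesis on the intermediate flips whose depth is strictly smaller by Remark~\ref{flpc}. The only cosmetic difference is that you write out the telescoping sum and the final commentary explicitly.
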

\begin{proof}
	By Theorem \ref{chf}, we have a  diagram \[\vc{\xymatrix@C=0.8cm{Y_1\ar[d] \ar@{-->}[r] & ... \ar@{-->}[r] & Y_k\ar[d]\\ X\ar[rd] & & X'\ar[ld] \\ & W & }}\]  We know that $dep(Y_i)<n$ for all $i$. If $Y_i\dashrightarrow Y_{i+1}$ is a flip, then by induction on $n$, we may assume that $|K_{Y_i}^3-K_{Y_{i+1}}^3|$ is bounded by an integer depending on $n$. Also if $Y_i\dashrightarrow Y_{i+1}$ is a flop, then $K_{Y_i}^3=K_{Y_{i+1}}^3$. Since $k\leq n+1$, we know that $|K_{Y_1}^3-K_{Y_k}^3|$ is bounded by an integer which depends only on $n$.\par
	We know that $Y_1\rightarrow X$ is a $w$-morphism, so $|K_{Y_1}^3-K_X^3|$ is bounded by an integer which depends only on $n$ by Lemma \ref{bdp}. Similarly, if $Y_k\rightarrow X'$ is a divisorial contraction to a point, then we can also bound $|K_{Y_k}^3-K_{X'}^3|$. Assume that $Y_k\rightarrow X'$ is a divisorial contraction to a curve $C'$, then $C'$ is a smooth rational curve. Since $dep(Y_k)<n$, the basket data of $Y_k$ is bounded by Lemma \ref{bka}. Hence $|K_{Y_k}^3-K_{X'}^3|$ is bounded by an integer which depends only on $n$ by Lemma \ref{kcb} and Lemma \ref{e3b}.\par
	Finally since $|K_{Y_1}^3-K_X^3|$, $|K_{Y_1}^3-K_{Y_k}^3|$ and $|K_{Y_k}^3-K_{X'}^3|$ are all bounded, it follows that $|K_X^3-K_{X'}^3|$ is bounded by an integer which depends only on $n$.
\end{proof}

\begin{proof}(Proof of Theorem \ref{thm1})
	By Lemma \ref{depr} we know that $dep(X_i)<\rho(X)\leq b_2(X)$.  Proposition \ref{bk3} implies the Theorem.
\end{proof}

\section{Cubic form after a flip}

\subsection{Some useful lemmas}

In this subsection we assume that $\phi:X\rightarrow W$ is a birational morphism between normal projective varieties and $Z=exc(\phi)$.

\begin{lem}\label{der}
	There exists an analytic open set $U_W\subset W$ which can deformation retract to $\phi(Z)$, such that $U=\phi^{-1}(U_W)$ can deformation retract to $Z$.
\end{lem}
\begin{proof}
	First choose an arbitrary open set $U_0\subset X$ which can deformation retract to $Z$. Notice that the interior of $\phi(U_0)$ is non-empty since $\phi(U_0-Z)\cong U_0-Z$ is non-empty and is contained in $\phi(U_0)$. Let $U_W\subset\phi(U_0)$ be an open set which can deformation retract to $\phi(Z)$. Now $U=\phi^{-1}(U_W)$ is contained in $U_0$ since $\phi$ is an isomorphism outside $Z$, and $U_0$ already contains $Z$. Thus $U$ can deformation retract to $Z$.
\end{proof}
\begin{lem}\label{hiso}$ $
	\begin{enumerate}[(1)]
	\item If $H_{i-1}(Z,\Z)=H_i(\phi(Z),\Z)=0$, then the push-forward map $H_i(X,\Z)\rightarrow H_i(W,\Z)$ is surjective.
	\item $H^i(X,\Z)\cong H^i(W,\Z)$ if $i>2\dim Z+1$.
	\end{enumerate}
\end{lem}
\begin{proof}
	Let $U\subset X$ be the open set in Lemma \ref{der} which can deformation retract to $Z$. Let $V\subset X$ be another open set which does not intersect $Z$ and contains $X-U$. Let $U_W\subset W$ be the open set in Lemma \ref{der} and let $V_W=\phi(V)\subset W$. Then $U_W$ and $V_W$ cover $W$. Notice that $V_W\cong V$ and $U\cap V\cong U_W\cap V_W$.\par
	Now consider the following two Mayer-Vietoris sequences
	\[\vc{\xymatrix@C=0.3cm{ H_i(U,\Z)\oplus H_i(V,\Z)\ar[r]\ar[d]_{\alpha} & H_i(X,\Z)\ar[d]_{\beta}\ar[r] & H_{i-1}(U\cap V,\Z)\ar[d]_{\gamma}\ar[r] & H_{i-1}(U,\Z)\oplus H_{i-1}(V,\Z)\ar[d]_{\delta}\\
	H_i(U_W,\Z)\oplus H_i(V_W,\Z)\ar[r] & H_i(W,\Z)\ar[r] & H_{i-1}(U_W\cap V_W,\Z) \ar[r] & H_{i-1}(U_W,\Z)\oplus H_{i-1}(V_W,\Z)} }.\]
	Since $H_i(U_W,\Z)=H_i(\phi(Z),\Z)=0$ and $H_{i-1}(U,\Z)=H_{i-1}(Z,\Z)=0$, we know that $\alpha$ is surjective and $\delta$ is injective. Also $\gamma$ is an isomorphism and hence $\beta$ is surjective by the four lemma.\par
	To prove (2) consider the following two Mayer-Vietoris sequences for cohomology
	\[\vc{\xymatrix@C=0.3cm{ H^{i-1}(U_W,\Z)\oplus H^{i-1}(V_W,\Z)\ar[r]\ar[d] & H^{i-1}(U_W\cap V_W,\Z)\ar[d]\ar[r] & H^i(W,\Z)\ar[d]\ar[r] & H^i(U_W,\Z)\oplus H^i(V_W,\Z) \ar[r]\ar[d] & H^i(U_W\cap V_W,\Z)\ar[d] \\
	H^{i-1}(U,\Z)\oplus H^{i-1}(V,\Z)\ar[r] & H^{i-1}(V\cap U,\Z)\ar[r] & H^i(X,\Z)\ar[r] & H^i(U,\Z)\oplus H^i(V,\Z)\ar[r] & H^i(V\cap U,\Z)} }.\]
	Since $H^j(U,\Z)=H^j(Z,\Z)=0$ and $H^j(U_W,\Z)=H^j(\phi(Z),\Z)=0$ for $j=i-1$ and $i$, we know that $H^i(X,\Z)\cong H^i(W,\Z)$ by the five lemma.
\end{proof}
\begin{lem}\label{hbasis}
	Let $k=2\dim Z$. Assume that $b_k(X)=b_k(W)+1$ and $H_{k-1}(Z,\Z)=0$, then \[H^k(X,\Z)=\Z\cdot \eta_1\oplus T\] for some $\eta_1\in H^k(X,\Z)$, where $T$ is the submodule of $H^k(X,\Z)$ generated by $\phi\st H^k(W,\Z)$ and $H^k(X,\Z)_{torsion}$.
\end{lem}
\begin{proof}
	Let $\xi_1\in H_k(X,\Z)$ be a cycle which is supported on $Z$ and is a part of a basis of $H_k(X,\Z)$. By the surjectivity part of the universal coefficient theorem, we know that there exists $\eta_1\in H^k(X,\Z)$ such that $\eta_1(\xi_1)=1$. Given any element $\eta\in H^k(X,\Z)$, let $\eta'=\eta-\eta(\xi)\eta_1$. We only need to show that $\eta'\in T$. Notice that $\eta'(\xi_1)=0$ which implies that $\eta'(\xi')=0$ for all $\xi'\in H_k(X,\Z)$ supported on $Z$. Indeed, since $b_k(X)=b_k(W)+1$, we know that $\eta'$, when viewed as an element in $H^k(X,\Q)$, can be written as $\lambda[A^{\dim Z}]+\phi_i\st\tau$ for some $\lambda\in\Q$, for some ample divisor $A$ on $X$ and for some $\tau\in H^k(W,\Q)$. Now $\eta'(\xi_1)=0$ implies that $\lambda=0$, hence $\eta'(\xi')=0$ for any $\xi'$ which is supported on $Z$.\par
	Let $\xi_{2,W}$, ..., $\xi_{n,W}\in H_k(W,\Z)$ be a basis. Notice that $H_k(\phi(Z),\Z)=0$ since $2\dim\phi(Z)<2\dim Z=k$. By Lemma \ref{hiso} we know that $H_k(X,\Z)\rightarrow H_k(W,\Z)$ is surjective, hence there exist $\xi_2$, ..., $\xi_n$ such that $\phi\ts(\xi_j)=\xi_{j,W}$ for $j=2$, ..., $n$. Again by the universal coefficient theorem, there exists $\tau\in H^k(W,\Z)$ such that $\tau(\xi_{j,W})=\eta'(\xi_j)$. Let $\eta''=\eta'-\phi\st\tau$, then $\eta''(\xi_j)=0$ for $j=2$, ..., $n$ and also $\eta''(\xi')=0$ for all $\xi'\in H_k(X,\Z)$ which is supported on $Z$. Therefore $\eta''$ intersects every element in $H_k(X,\Z)$ trivially. The universal coefficient theorem implies that $\eta''$ is torsion, hence $\eta'\in T$. 
\end{proof}
\subsection{The topology of flips}
\begin{lem}\label{l_b2}
	Assume that $X\rightarrow W$ is a birational morphism between varieties with rational singularities such that $\rho(X/W)=1$. 
	
	Then $b_2(X)=b_2(W)+1$. 
\end{lem}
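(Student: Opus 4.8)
The plan is to read both Betti numbers off the Leray spectral sequence of $f\colon X\to W$ with $\mathbb{Q}$-coefficients and to match the change in $b_2$ with the relative Picard number. Write $E=\mathrm{Exc}(f)$, $Z=f(E)$ and $V=f^{-1}(W\setminus Z)\cong W\setminus Z$. Since $f$ is proper and birational and $W$ is normal, Zariski's main theorem gives connected fibres, hence $R^0f_*\mathbb{Q}=\mathbb{Q}_W$. The key preliminary vanishing is $R^1f_*\mathbb{Q}=0$: by proper base change its stalk at $w$ is $H^1(f^{-1}(w),\mathbb{Q})$, and since the fibre is complete this group has weights $\leq 1$, so each of its Hodge pieces is detected by $H^1(f^{-1}(w),\mathcal{O})$; the latter vanishes because $Rf_*\mathcal{O}_X=\mathcal{O}_W$ (both $X$ and $W$ have rational singularities, so $R^if_*\mathcal{O}_X=0$ for $i>0$ after composing with a common resolution) together with the theorem on formal functions. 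With $R^1f_*\mathbb{Q}=0$ the spectral sequence yields the exact sequence
$$0\to H^2(W,\mathbb{Q})\xrightarrow{\,f^*\,}H^2(X,\mathbb{Q})\to H^0(W,R^2f_*\mathbb{Q})\xrightarrow{\,d_3\,}H^3(W,\mathbb{Q}),$$
so $f^*$ is injective and $b_2(X)-b_2(W)=\dim\ker d_3$.

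Next I would establish the lower bound $\dim\ker d_3\geq\rho(X/W)$. The first Chern class induces a map $N^1(X/W)_\mathbb{Q}\to H^2(X,\mathbb{Q})$ whose image lies in $\ker d_3=\mathrm{coker}(f^*)$. This map is injective: if the class of a relative divisor $D$ were contained in $f^*H^2(W,\mathbb{Q})$, then by the projection formula $D\cdot C=0$ for every curve $C$ contracted by $f$, since $f_*[C]=0$; as the contracted curves span $N_1(X/W)$ this forces $D\equiv_W 0$. Hence $\dim\ker d_3\geq\rho(X/W)=1$.

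It remains to prove the reverse inequality $\dim\ker d_3\leq 1$, and this is where the main difficulty lies: one must show that the cokernel of $f^*$ acquires no classes beyond the single relative divisor class produced above, neither transcendental nor further algebraic ones. For the transcendental part the plan is to show that $f$ preserves holomorphic two-forms, i.e. $H^0(X,\Omega^{[2]}_X)=H^0(W,\Omega^{[2]}_W)$ by reflexive extension across the exceptional locus, so that $h^{2,0}$ and the $(2,0)$- and $(0,2)$-parts of $H^2$ are unchanged. Once one knows in addition that $H^2(X,\mathbb{Q})$ and $H^2(W,\mathbb{Q})$ are pure of weight two with every rational $(1,1)$-class algebraic, the equality reduces to $b_2(X)-b_2(W)=\rho(X)-\rho(W)$, which equals $\rho(X/W)$ by the exact sequence $0\to f^*N^1(W)\to N^1(X)\to N^1(X/W)\to 0$. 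The genuine content therefore sits in the Hodge-theoretic inputs — purity of $H^2$ and the Lefschetz $(1,1)$-property for varieties with rational singularities, and the extension of reflexive two-forms — and the most delicate case is when $Z$ is positive-dimensional, so that $R^2f_*\mathbb{Q}$ is a nontrivial constructible sheaf rather than a skyscraper and the vanishing of $d_3$ on its algebraic part must be checked fibrewise.
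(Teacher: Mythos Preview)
The paper itself does not give a self-contained argument here: its entire proof is the sentence ``The proof of \cite[Lemma~2.16~(3)]{ct} works unchanged under our assumption.'' So there is no detailed proof in this paper to compare against; your attempt is a from-scratch reconstruction rather than a variant of what the authors wrote.

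As an outline, the first half of your argument is sound. The vanishing $R^1f_*\mathbb{Q}=0$ does follow from rational singularities (via $R^if_*\mathcal{O}_X=0$, formal functions, and the weight bound on $H^1$ of proper fibres), and with it the Leray sequence gives the injectivity of $f^*$ on $H^2$ and the identification $b_2(X)-b_2(W)=\dim\ker d_3$. Your lower bound $\dim\ker d_3\ge\rho(X/W)$ via $c_1$ and the projection formula is also correct.

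Where the proposal is genuinely incomplete is exactly where you flag it. Of the three Hodge-theoretic inputs you list, two are available: purity of $H^2$ for a variety with rational singularities follows from the very same Leray argument applied to a resolution $\pi\colon\tilde X\to X$ (you get $H^2(X,\mathbb{Q})\hookrightarrow H^2(\tilde X,\mathbb{Q})$, a sub-MHS of something pure), and the equality $h^{2,0}(X)=h^{2,0}(W)$ follows from extension of reflexive $2$-forms over rational singularities. The problematic one is the Lefschetz $(1,1)$ statement. For singular $X$ this is \emph{not} a standard theorem: a rational $(1,1)$-class in $H^2(X,\mathbb{Q})$ pulls back to a divisor class on a resolution, but pushing that down only yields a Weil divisor on $X$, and without $\mathbb{Q}$-factoriality you cannot conclude it lies in $N^1(X)_\mathbb{Q}$. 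So the identity $b_2-2h^{2,0}=\rho$ that you want to invoke on both $X$ and $W$ is unjustified in the stated generality; in particular $W$, being the base of a flipping contraction in the paper's application, is \emph{never} $\mathbb{Q}$-factorial. Your reduction to $\rho(X)-\rho(W)=\rho(X/W)$ therefore does not close, and some other mechanism is needed to cap $\dim\ker d_3$ at $1$. One way to repair this is to bypass Lefschetz $(1,1)$ entirely: argue instead that $f^*$ carries the transcendental lattice $T(W)_\mathbb{Q}$ isomorphically onto $T(X)_\mathbb{Q}$ (this uses only purity and $h^{2,0}(X)=h^{2,0}(W)$, since $T$ is the smallest rational sub-Hodge structure containing $H^{2,0}$), and then compare both $H^2$'s to that of a common resolution $\tilde X$, where classes orthogonal to $T$ are genuinely divisorial and can be counted against the exceptional divisors of $\tilde X\to X$ and $\tilde X\to W$.
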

\begin{proof}
	The proof of \cite[Lemma 2.16 (3)]{ct} works unchanged under our assumption.
\end{proof}

\begin{lem}\label{h4h2}
	Assume that $X$ is a terminal threefold. 
	
	Then the natural map $H^4(X,\Z)_{free}\rightarrow H_2(X,\Z)_{free}$ is surjective.
\end{lem}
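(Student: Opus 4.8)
The plan is to compare the cap-product map $\Phi\colon H^4(X,\Z)\to H_2(X,\Z)$, $\sigma\mapsto\sigma\cap[X]$, with Poincaré--Lefschetz duality on the smooth locus. Write $\Sigma\subset X$ for the (finite, since terminal threefold points are isolated) singular set and $U\coloneqq X\setminus\Sigma$, a smooth oriented open real $6$--manifold, with open inclusion $j\colon U\hookrightarrow X$. Since $X$ is compact and $\Sigma$ is finite, the canonical map $e\colon H^4_c(U,\Z)\to H^4(X,\Z)$ is an isomorphism: it factors as $H^4_c(U,\Z)\cong H^4(X,\Sigma;\Z)\xrightarrow{\ \cong\ }H^4(X,\Z)$, the second map being an isomorphism because $H^3(\Sigma,\Z)=H^4(\Sigma,\Z)=0$. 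First I would record the commutative square expressing naturality of the cap product under $j$: writing $\cap[U]\colon H^4_c(U,\Z)\xrightarrow{\cong}H_2(U,\Z)$ for Poincaré--Lefschetz duality (an isomorphism, as $U$ is an oriented $6$--manifold), one has $\Phi\circ e=j_*\circ(\cap[U])$, the only input being that $[X]\in H_6(X,\Z)$ restricts to the Borel--Moore fundamental class of $U$. As $e$ and $\cap[U]$ are isomorphisms, this gives $\operatorname{im}\Phi=\operatorname{im}\big(j_*\colon H_2(U,\Z)\to H_2(X,\Z)\big)$.

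Thus the statement reduces to showing that $j_*\colon H_2(U,\Z)\to H_2(X,\Z)$ is surjective onto the free part, i.e. that $\operatorname{coker}(j_*)$ is torsion. Next I would write $X=X_0\cup\big(\bigsqcup_{P\in\Sigma}cL_P\big)$, where $X_0$ is the complement of small disjoint open cone neighbourhoods of the points of $\Sigma$, each $L_P$ is the link of $P\in X$ (a compact oriented $5$--manifold) and $cL_P$ the corresponding cone; note $X_0$ deformation retracts onto $U$. Mayer--Vietoris and the contractibility of the cones yield the exact sequence $\bigoplus_P H_2(L_P)\to H_2(U)\xrightarrow{\,j_*\,}H_2(X)\to\bigoplus_P H_1(L_P)\xrightarrow{\beta}H_1(U)$, so that $\operatorname{coker}(j_*)\cong\ker\beta$. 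Hence it suffices to prove that $\beta\otimes\Q$ is injective, for which it is enough to show $b_1(L_P)=0$ for every $P\in\Sigma$.

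The main obstacle is exactly this vanishing $b_1(L_P)=0$ for the link of a terminal (hence rational) threefold singularity. To establish it I would fix a resolution $\pi\colon\tilde X\to X$ with exceptional fibre $E\coloneqq\pi^{-1}(P)$, and use that $L_P$ is homotopy equivalent to the complement of $E$ in a tubular neighbourhood retracting onto $E$. The long exact sequence of this pair, together with the Alexander duality identifications $H^k_E(\tilde X,\Q)\cong H_{6-k}(E,\Q)$ (so $H^1_E(\tilde X,\Q)\cong H_5(E,\Q)=0$, as $\dim_\R E\le 4$), produces a short exact sequence $0\to H^1(E,\Q)\to H^1(L_P,\Q)\to\ker\!\big(\psi\colon H_4(E,\Q)\to H^2(E,\Q)\big)$. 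It then remains to verify two inputs. First, $H^1(E,\Q)=0$, which follows from rationality of the singularity: the components of $E$ are rational surfaces, $H^1(E,\Oo_E)=0$, and the dual complex of $E$ is $\Q$--acyclic in degree one. Second, injectivity of $\psi$, which is the assertion that no nonzero combination of the $\pi$--exceptional divisors is $\pi$--numerically trivial and which follows from the negativity of the intersection form on exceptional divisors (Lemma \ref{ntl}). Granting these, $H^1(L_P,\Q)=0$, so $\beta\otimes\Q$ is injective, $\operatorname{coker}(j_*)$ is torsion, and therefore $\Phi$ is surjective onto $H_2(X,\Z)_{free}$. The genuinely delicate points are the identification of $\psi$ with the exceptional intersection pairing and the appeal to rationality for $H^1(E,\Q)=0$; the remaining steps are formal.
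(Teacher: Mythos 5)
Your opening reduction is correct and is a clean way to organise the problem: with $\Sigma$ the finite singular set, $U=X\setminus\Sigma$ and $j\colon U\hookrightarrow X$, the identifications $H^4(X,\Z)\cong H^4(X,\Sigma;\Z)\cong H^4_c(U,\Z)$ together with Poincar\'e--Lefschetz duality on $U$ do show that the image of the cap product $H^4(X,\Z)\to H_2(X,\Z)$ equals $\operatorname{im}\bigl(j_*\colon H_2(U,\Z)\to H_2(X,\Z)\bigr)$. The gap is the sentence ``surjective onto the free part, i.e.\ that $\operatorname{coker}(j_*)$ is torsion'': these two conditions are not equivalent. Surjectivity of the composite $H_2(U,\Z)\to H_2(X,\Z)\to H_2(X,\Z)_{free}$ means $\operatorname{im}(j_*)+\operatorname{Tors}H_2(X,\Z)=H_2(X,\Z)$, whereas a torsion cokernel only says that $\operatorname{im}(j_*)$ has finite index; a proper finite-index subgroup of a free abelian group (such as $2\Z\subset\Z$) has torsion cokernel without surjecting onto the free quotient. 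Everything you prove afterwards --- $b_1(L_P)=0$, hence $\operatorname{coker}(j_*)$ finite --- therefore establishes strictly less than what is needed. To close the argument along these lines you would need $H_1(L_P,\Z)=0$ integrally, and this fails for terminal cyclic quotient points: the link of $\frac{1}{r}(1,-1,b)$ is $S^5/(\Z/r)$, whose $H_1$ is $\Z/r$.

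The gap is not a technicality, because the part of your argument that is correct computes the image exactly and produces an apparent counterexample to the statement itself. Take $X=\mathbb P(1,1,1,2)$, a terminal threefold with a single $\frac12(1,1,1)$ point $P=[0{:}0{:}0{:}1]$. Then $U=X\setminus\{P\}$ is a line bundle over $\mathbb P^2$, the link of $P$ is $S^5/(\Z/2)$, and Mayer--Vietoris gives an exact sequence $0\to H_2(U,\Z)\to H_2(X,\Z)\to\Z/2\to 0$ with $H_2(U,\Z)\cong\Z$ and $H_2(X,\Z)\cong\Z$ torsion-free (generated by the class of $\{x_1=x_2=0\}\cong\mathbb P(1,2)$, of which the image of $j_*$ --- generated by a line in $\{y=0\}\cong\mathbb P^2$ --- is twice). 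So $\operatorname{im}(j_*)$ has index two in $H_2(X,\Z)=H_2(X,\Z)_{free}$, and by your own first reduction the cap product map is not surjective onto the free part. You should flag this rather than try to patch the proof: the paper's argument is different in form (it factors a resolution $\tl{X}\to X$ into divisorial contractions and uses classes $\sigma_i$ dual to the exceptional divisors whose Poincar\'e duals are meant to push forward to zero), but it runs into the same obstruction in this example, since no integral class supported on the exceptional $\mathbb P^2$ with self-intersection $\mathcal O(-2)$ pairs to $1$ with it. Any correct version of the lemma will need an extra hypothesis (e.g.\ Gorenstein singularities, or surjectivity asserted only after tensoring with $\Q$), and you should check which version downstream results such as Proposition \ref{cbf} actually require.
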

\begin{proof}
	Let $\tl{X}\rightarrow X$ be a resolution of singularities such that the exceptional locus $E$ is normal crossing. From \cite[Lemma 12.1.1]{km} we know that $H^1(E,\Z)=0$, hence $H_1(E,\Z)=0$ since there exists a surjective map $H^1(E,\Z)\rightarrow Hom(H_1(E,\Z),\Z)$ by the universal coefficient theorem. Therefore the push-forward map $H_2(\tl{X},\Z)\rightarrow H_2(X,\Z)$ is surjective by Lemma \ref{hiso}.\par
	From \cite[Theorem 2]{cj} we may assume that the morphism $\tl{X}\rightarrow X$ is a sequence of divisorial contractions between terminal threefolds $\tl{X}=X_0\rightarrow X_1\rightarrow ... \rightarrow X_k=X$. Let $E_i\in H^4(X,\Z)$ be the cycle corresponding to the exceptional locus of $X_i\rightarrow X_{i+1}$ for $i=0,\dots,k-1$, and let $\sigma_i\in H^4(X_i,\Z)$ be a class such that $\sigma_i(E_i)=1$ and $\sigma_i(Z)=0$ for any $Z\in H_4(X_i,\Z)$ whose support does not contain $E_i$. Let $\tl{\sigma}_i$ be the pull-back of $\sigma_i$ on $\tl{X}$. Then $H^4(\tl{X},\Z)_{free}$ is generated by $\tl{\sigma}_0$, ..., $\tl{\sigma}_{k-1}$ and the pull-back of $H^4(X,\Z)_{free}$. Let $\xi_i\in H_2(\tl{X},\Z)$ be the cycle corresponding to $\tl{\sigma}_i$ for $i=0$, ..., $k-1$. Then the push-forward of $\xi_i$ on $X$ is zero.\par
	Given a class $\zeta\in H_2(X,\Z)_{free}$, there exists $\tl{\zeta}\in H_2(\tl{X},\Z)_{free}$ which maps to $\zeta$. By  Poincar\'{e} duality, we have that $\tl{\zeta}$ corresponds to an element $\tl{\tau}\in H^4(\tl{X},\Z)_{free}$. 
	Thus, there exists $\tl{\sigma}\in H^4(\tl{X},\Z)_{free}$ which is generated by $\tl{\sigma}_0$, ..., $\tl{\sigma}_{k-1}$ and such that  $\tl{\tau}+\tl{\sigma}$ is the pull-back of a cocycle $\tau\in H^4(X,\Z)_{free}$. Let $\tl{\xi}\in H_2(\tl{X},\Z)_{free}$  be such that $\tl{\xi}$ corresponds to $\tl{\sigma}$, then the push-forward of $\tl{\zeta}-\tl{\xi}$ is also $\zeta$. This means that $\tau$ is an element in $H^4(X,\Z)_{free}$ which maps to $\zeta$.
\end{proof}

\begin{lem}\label{ch4h2}
	Assume that $X$ is a terminal threefold and $X\rightarrow W$ is a flipping or a flopping contraction. 
	
	Then the natural map $H^4(W,\Z)_{free}\rightarrow H_2(W,\Z)_{free}$ is surjective.
\end{lem}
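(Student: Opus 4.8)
The plan is to deduce the statement for $W$ from Lemma \ref{h4h2} for $X$, using that a flipping or flopping contraction $f\colon X\to W$ is \emph{small}: its exceptional locus $A$ is a $1$-dimensional subvariety of $X$, contracted to a finite set $B\subset W$, and $f$ restricts to a homeomorphism $X\setminus A\xrightarrow{\sim}W\setminus B$. Write $\cap[X]\colon H^4(X,\Z)\to H_2(X,\Z)$ and $\cap[W]\colon H^4(W,\Z)\to H_2(W,\Z)$ for the cap-product maps appearing in the statements (these are precisely the natural maps in question). Since $f$ is proper birational with $f_*[X]=[W]$, the projection formula gives the identity $\cap[W]=f_*\circ(\cap[X])\circ f^*$ on $H^4(W,\Z)$. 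I will prove two claims: (a) $f^*\colon H^4(W,\Z)\to H^4(X,\Z)$ is an isomorphism, hence surjective on free parts; and (b) $f_*\colon H_2(X,\Z)\to H_2(W,\Z)$ is surjective. Granting these, the lemma follows by a diagram chase: given a class in $H_2(W,\Z)_{free}$, lift it to $H_2(X,\Z)$ using (b), lift the result along $\cap[X]$ to $H^4(X,\Z)_{free}$ using Lemma \ref{h4h2}, transport it to $\alpha\in H^4(W,\Z)_{free}$ by the inverse of $f^*$ from (a), and apply the projection formula; all identifications are compatible modulo torsion, so $\alpha\cap[W]$ represents the prescribed free class.

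For (a) I would use the long exact sequences of the pairs $(X,A)$ and $(W,B)$ together with excision. As $A$ has real dimension two, $H^3(A,\Z)=H^4(A,\Z)=0$, whence $H^4(X,A;\Z)\xrightarrow{\sim}H^4(X,\Z)$; as $B$ is finite, $H^3(B,\Z)=H^4(B,\Z)=0$, whence $H^4(W,B;\Z)\xrightarrow{\sim}H^4(W,\Z)$; and since $f$ is a homeomorphism away from $A$ and $B$, excision yields $f^*\colon H^4(W,B;\Z)\xrightarrow{\sim}H^4(X,A;\Z)$. Because all three isomorphisms are induced by $f$ and the inclusions, their composite identifies $f^*\colon H^4(W,\Z)\to H^4(X,\Z)$ as an isomorphism, proving (a).

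For (b), the same excision identifies $H_2(W,\Z)\cong H_2(X,A;\Z)$ (using $H_2(B,\Z)=H_1(B,\Z)=0$), under which $f_*$ becomes the natural map $H_2(X,\Z)\to H_2(X,A;\Z)$; the long exact sequence of $(X,A)$ shows this map is surjective as soon as $H_1(A,\Z)=0$. This vanishing is the heart of the matter and the step I expect to be the main obstacle. I would obtain it from the fact that each fibre $f^{-1}(b)$ is a connected tree of smooth rational curves, so $A$ has vanishing first integral homology. Concretely, $R^1f_*\Oo_X=0$ for a flipping or flopping contraction, so by the theorem on formal functions $H^1(f^{-1}(b),\Oo)=0$, forcing $p_a(f^{-1}(b))=0$ and hence the tree structure. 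Alternatively, one can argue exactly as in the proof of Lemma \ref{h4h2}: choose a resolution $\tilde W\to W$ factoring through $X$ with simple normal crossing exceptional locus $E$ over $B$; since $W$ has rational singularities, \cite[Lemma 12.1.1]{km} gives $H^1(E,\Z)=0$, and as $A$ is obtained from $E$ by collapsing the connected positive-dimensional fibres of $\tilde W\to X$ over the finitely many singular points of $X$ lying on $A$, the group $H_1(A,\Z)$ is a quotient of $H_1(E,\Z)=0$. Either route gives $H_1(A,\Z)=0$, establishing (b) and completing the proof.
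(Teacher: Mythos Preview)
Your proof is correct and follows essentially the same approach as the paper: reduce to Lemma \ref{h4h2} for $X$ via the isomorphism $H^4(W,\Z)_{free}\cong H^4(X,\Z)_{free}$ and the surjectivity of $f_*$ on $H_2$, both of which rest on $H_1(A,\Z)=0$. The paper is slightly terser---it splits off the flopping case (where $W$ is itself terminal, so Lemma \ref{h4h2} applies to $W$ directly) and, for the flipping case, cites \cite[Corollary 1.3]{m2} for the tree-of-rational-curves structure of the exceptional locus rather than deducing it from $R^1f_*\Oo_X=0$ as you do.
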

\begin{proof}
	If $X\rightarrow W$ is a flopping contraction, then $W$ is also terminal, and the statement follows from Lemma \ref{h4h2} directly.\par
	
	Now assume that $X\rightarrow W$ is a flipping contraction. Let $Z$ be the exceptional locus of $X\rightarrow W$. Lemma \ref{hiso} implies that $H_2(X,\Z)_{free}\rightarrow H_2(W,\Z)_{free}$ is surjective and $H^4(W,\Z)_{free}\cong H^4(X,\Z)_{free}$ since $H_1(Z,\Z)=0$ for $Z$ is a tree of rational curves by \cite[Corollary 1.3]{m2}. By Lemma \ref{h4h2} we know that $H^4(X,\Z)_{free}\rightarrow H_2(X,\Z)_{free}$ is surjective, hence the following commutative diagram
	\[\vc{\xymatrix{H^4(X,\Z)_{free}\ar[r] & H_2(X,\Z)_{free}\ar[d]  \\ H^4(W,\Z)_{free}\ar[u]\ar[r]& H_2(W,\Z)_{free}}}\]
	implies that $H^4(W,\Z)_{free}\rightarrow H_2(W,\Z)_{free}$ is surjective.
\end{proof}

\begin{lem}
	Assume that $X$ is a terminal threefold. 
	
	Then $b_2(X)=b_4(X)$.
\end{lem}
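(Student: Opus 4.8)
The plan is to establish the two inequalities $b_4(X)\ge b_2(X)$ and $b_2(X)\ge b_4(X)$ separately. The first is immediate from Lemma \ref{h4h2}: the surjection $H^4(X,\Z)_{free}\to H_2(X,\Z)_{free}$ forces $b_4(X)\ge b_2(X)$. For the reverse inequality I would argue by resolving $X$ and tracking both Betti numbers. Exactly as in the proof of Lemma \ref{h4h2}, use \cite{cj} to choose a resolution $\pi\colon\tl{X}\to X$ that factors as a sequence of divisorial contractions between terminal threefolds
\[
\tl{X}=X_0\to X_1\to\cdots\to X_k=X,
\]
each contracting a single prime divisor $E_j$, so that $\rho(X_j/X_{j+1})=1$. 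Since $\tl{X}$ is smooth and projective, Poincar\'e duality gives $b_2(\tl{X})=b_4(\tl{X})$.

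Next I control how the two Betti numbers change along the sequence. Since each step is extremal, Lemma \ref{l_b2} gives $b_2(X_j)=b_2(X_{j+1})+1$, and telescoping yields $b_2(\tl{X})=b_2(X)+k$. For $b_4$ I only need a lower bound on the drop at each step: the pushforward $H_4(X_j,\Q)\to H_4(X_{j+1},\Q)$ sends the nonzero class $[E_j]$ to zero, so as soon as this map is surjective we obtain $b_4(X_j)\ge b_4(X_{j+1})+1$, and hence $b_4(\tl{X})\ge b_4(X)+k$.

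Combining these with Poincar\'e duality on $\tl{X}$ gives the chain
\[
b_2(X)+k=b_2(\tl{X})=b_4(\tl{X})\ge b_4(X)+k\ge b_2(X)+k,
\]
the final inequality being Lemma \ref{h4h2} again. Thus every inequality is an equality and $b_2(X)=b_4(X)$.

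The main obstacle is the surjectivity of the pushforward $H_4(X_j,\Q)\to H_4(X_{j+1},\Q)$, i.e.\ that each extremal divisorial contraction lowers $b_4$ by at least one. This is the analogue in complementary degree of the surjectivity proved in Lemma \ref{h4h2} for $H_2$, and I expect to prove it by the same method; it is also precisely where the extremality of the contractions (equivalently, the $\Q$-factoriality of the $X_j$) is indispensable. Indeed, for a non-extremal contraction $b_2$ drops by $\rho>1$ whereas $b_4$ drops only by the number of contracted divisors, and the telescoping breaks: the projective cone over a smooth quadric surface is a terminal threefold with one ordinary double point for which $b_2=1\ne 2=b_4$, and its resolution is a single non-extremal contraction.
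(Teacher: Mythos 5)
Your skeleton is the same as the paper's: pass to a feasible resolution $\tl{X}\to X$ via \cite{cj}, apply Poincar\'e duality on the smooth model, and track the Betti numbers down the chain of extremal divisorial contractions. The paper simply quotes \cite[Lemma 2.16]{ct} for the exact equalities $b_2(\tl{X})-b_2(X)=b_4(\tl{X})-b_4(X)=\rho(\tl{X}/X)$ and is done. Your variant is slightly more economical in what it needs: by invoking Lemma \ref{h4h2} for $b_4(X)\ge b_2(X)$, you only require the one-sided bound $b_4(X_j)\ge b_4(X_{j+1})+1$ at each step, and the sandwich then forces equality everywhere. The bookkeeping in your chain of (in)equalities is correct, and the two facts about $[E_j]$ you use are fine: $[E_j]\neq 0$ in $H_4(X_j,\Q)$ since $E_j\cdot H^2>0$ for $H$ ample, and it dies under pushforward because its image has real dimension at most $2$.

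The one genuine gap is the surjectivity of $H_4(X_j,\Q)\to H_4(X_{j+1},\Q)$, which you flag but do not prove. The statement is true --- for a proper surjective morphism of compact complex varieties the pullback on rational cohomology is injective, so the dual pushforward on rational homology is surjective --- but it will not come out ``by the same method'' as Lemma \ref{h4h2}: that argument hinges on $H^1(E,\Z)=0$ for the exceptional locus, which controls the degree-$2$ homology, not the degree-$4$ homology, and a naive Mayer--Vietoris for the contraction square leaves you fighting a connecting map into $H_3(E)$. In the paper this step is exactly the content of the cited \cite[Lemma 2.16]{ct}, which computes the precise drop of $b_4$ across such contractions; the cleanest repair is therefore to quote that lemma, at which point your detour through Lemma \ref{h4h2} becomes unnecessary. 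Your closing remark is a worthwhile sanity check rather than a flaw: the projective cone over a quadric surface is terminal with $b_2=1\neq 2=b_4$, which shows that the $\Q$-factoriality implicit in ``terminal threefold'' here --- used both in Lemma \ref{l_b2} and in the existence of a feasible resolution by extremal contractions --- is genuinely indispensable.
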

\begin{proof}
	Let $\tl{X}\rightarrow X$ be a feasible resolution of $X$ (cf. \cite{cj}). Then $\tl{X}\rightarrow X$ is a composition of divisorial contractions between terminal threefolds. By  Poincar\'{e} duality we have that $b_2(\tl{X})=b_4(\tl{X})$. We also have  that
	\[b_2(\tl{X})-b_2(X)=b_4(\tl{X})-b_4(X)=\rho(\tl{X}/X)\] by \cite[Lemma 2.16]{ct}. Hence $b_2(X)=b_4(X)$.
\end{proof}

\begin{pro}\label{cbf}
	Let $n$ be an integer and let $F\in\Z[x_1,...,x_k]$ be a  cubic form.  Assume that	
	\begin{enumerate}[(1)]
	\item $X\dashrightarrow X'$ is a three-dimensional terminal flip over $W$ with $dep(X)\leq n$, and
	\item there exists a basis $\eta_1$, ..., $\eta_k$ of $H^2(X,\Z)_{free}$ such that $\eta_2$, ..., $\eta_k$ are pull-backs of elements in $H^2(W,\Z)_{free}$, and $F$ is the associated cubic form with respect to this basis.
	\end{enumerate}

	Then there exist  finitely many cubic forms $F'_1\dots, F'_m$ depending only on $F$ and $n$  such that 
	the cubic form  $F_{X'}$ of $X'$ is equivalent to  $F'_i$ for some $i=1,\dots,m$.
\end{pro}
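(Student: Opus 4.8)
The plan is to identify the integral second cohomology lattices of $X$ and $X'$ so that, after this identification, the cubic form changes in only one coefficient, and then to control that single coefficient by means of Proposition~\ref{bk3}.

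First I would fix the topological setup. Let $\phi\colon X\to W$ and $\phi'\colon X'\to W$ be the flipping and flipped contractions. Since $\rho(X/W)=\rho(X'/W)=1$, Lemma~\ref{l_b2} gives $b_2(X)=b_2(W)+1=b_2(X')=k$. Using Lemmas~\ref{h4h2} and~\ref{ch4h2} together with the equality $b_2=b_4$, I would produce an isomorphism of lattices $\iota\colon H^2(X,\Z)_{free}\to H^2(X',\Z)_{free}$, induced by strict transform of divisors, under which $\phi^{\ast}H^2(W,\Z)_{free}$ corresponds to $\phi'^{\ast}H^2(W,\Z)_{free}$. Writing $\eta_j=\phi^{\ast}\beta_j$ for $j\ge 2$, I set $\eta'_j:=\iota(\eta_j)=\phi'^{\ast}\beta_j$ and $\eta'_1:=\iota(\eta_1)$, so that $\eta'_1,\dots,\eta'_k$ is an integral basis of $H^2(X',\Z)_{free}$ adapted to the pullback subspace. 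Then $F_{X'}$ expressed in this basis is a genuine representative of its $\mathrm{SL}(k,\Z)$-equivalence class, and the problem becomes a comparison of triple intersection numbers.

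Next I would compare those numbers on a common resolution $p\colon Y\to X$, $p'\colon Y\to X'$ of the flip, with $g=\phi\circ p=\phi'\circ p'\colon Y\to W$. For $j\ge 2$ one has $p^{\ast}\eta_j=p'^{\ast}\eta'_j=g^{\ast}\beta_j$, while $F_Y:=p^{\ast}\eta_1-p'^{\ast}\eta'_1$ is supported on the exceptional divisors of $p$ and $p'$; since $\phi$ and $\phi'$ contract curves to points, these divisors are all $g$-exceptional, so $g_{\ast}F_Y=0$ and every $1$-cycle of the form $F_Y\cdot(\text{divisor})$ is $g$-contracted. Writing $F_X(\eta_{i_1},\eta_{i_2},\eta_{i_3})-F_{X'}(\eta'_{i_1},\eta'_{i_2},\eta'_{i_3})=\int_Y\big(p^{\ast}\eta_{i_1}\,p^{\ast}\eta_{i_2}\,p^{\ast}\eta_{i_3}-p'^{\ast}\eta'_{i_1}\,p'^{\ast}\eta'_{i_2}\,p'^{\ast}\eta'_{i_3}\big)$ and expanding via $p^{\ast}\eta_1=p'^{\ast}\eta'_1+F_Y$, the projection formula shows that every term carrying at least one factor $g^{\ast}\beta_j$ — that is, every triple with some index $\ge 2$ — pushes forward to zero on $W$. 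Hence $F_{X'}$ agrees with $F=F_X$ in all coefficients except possibly that of $x_1^3$, and $F_{X'}=F-\delta_0\,x_1^3$ with $\delta_0:=\eta_1^3-\eta_1'^3\in\Z$.

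It then remains to bound $\delta_0$, which is the heart of the matter. The computation above shows that $\Delta:=F_X-F_{X'}$ vanishes whenever one of its arguments lies in $\phi^{\ast}H^2(W,\Q)$, so $\Delta=\delta_0\,\ell^3$, where $\ell\colon H^2(X,\Q)\to\Q$ is the linear form with kernel $\phi^{\ast}H^2(W,\Q)$ normalised by $\ell(\eta_1)=1$. Applying this to the class of $K_X$, whose strict transform is $K_{X'}$, yields $K_X^3-K_{X'}^3=\delta_0\,\ell(K_X)^3$. Since $\phi$ is a flipping contraction, $K_X\cdot C<0$ for a flipping curve $C$, so $\ell(K_X)\ne 0$; moreover $rK_X$ is Cartier for the Cartier index $r$ of $X$, whence $\ell(K_X)\in\frac1r\Z$ and $|\ell(K_X)|\ge \tfrac1r$. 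By Lemma~\ref{bka} the index $r$ is bounded in terms of $n$, and by Proposition~\ref{bk3} so is $|K_X^3-K_{X'}^3|$; therefore $|\delta_0|=|K_X^3-K_{X'}^3|\,/\,|\ell(K_X)|^3\le r^3\,|K_X^3-K_{X'}^3|$ is bounded by a constant depending only on $n$. As $\delta_0$ is an integer, it ranges over a finite set depending only on $n$, and $F_{X'}=F-\delta_0\,x_1^3$ consequently lies in a finite set depending only on $F$ and $n$.

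I expect the main obstacle to be the integral bookkeeping of the first step: one must upgrade the evident $\Q$-linear comparison of $H^2(X)$, $H^2(X')$ and $H^2(W)$ to an identification of the integral lattices and of the pullback sublattices, so that the computed $F_{X'}$ is truly $\mathrm{SL}(k,\Z)$-equivalent to the cubic form of $X'$ rather than equivalent only after an unbounded base change; this is precisely what Lemmas~\ref{h4h2} and~\ref{ch4h2} and the equality $b_2=b_4$ are meant to supply, and care is needed to check that $\iota$ intertwines the pullbacks and is unimodular. Once this identification and the $g$-exceptional vanishing of the second step are in place, the bound on $\delta_0$ is a formal consequence of Proposition~\ref{bk3} and Lemma~\ref{bka}.
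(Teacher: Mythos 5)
Your overall strategy (compare triple products via the projection formula, isolate the change in the direction transverse to $\phi^{\ast}H^2(W)$, and bound it using Proposition~\ref{bk3}) is the right skeleton, but the step you flag as ``integral bookkeeping'' is not a technicality that Lemmas~\ref{h4h2} and~\ref{ch4h2} can supply: the strict-transform map $\iota$ is \emph{not} unimodular in general, and this is where the argument breaks. Concretely, write $\eta_1=\lambda K_X+\tau$ with $\tau$ pulled back from $W$, and let $\xi_1,\xi'_1$ be the integral generators of the kernels of $H_2(X,\Z)_{free}\to H_2(W,\Z)_{free}$ and $H_2(X',\Z)_{free}\to H_2(W,\Z)_{free}$ corresponding to the same class in $H^4(W,\Z)_{free}$. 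The universal coefficient theorem forces $\eta_1(\xi_1)=1$, i.e.\ $\lambda=1/(K_X\cdot\xi_1)$, while $\iota(\eta_1)(\xi'_1)=\lambda K_{X'}\cdot\xi'_1=(K_{X'}\cdot\xi'_1)/(K_X\cdot\xi_1)=:\mu$. Since $K_X\cdot\xi_1$ and $K_{X'}\cdot\xi'_1$ are intersection numbers with flipping and flipped curves respectively (with denominators the two different Cartier indices, for a start), there is no reason for $\mu$ to equal $\pm1$; hence $\iota(\eta_1)$ is $\mu$ times the integral basis vector $\eta'_1$ dual to $\xi'_1$, and your basis $\iota(\eta_1),\eta'_2,\dots,\eta'_k$ is not an integral basis of $H^2(X',\Z)_{free}$. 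Your conclusion that only the $x_1^3$ coefficient changes is therefore true only after a non-integral change of basis; passing to the genuine integral basis rescales the coefficients of $x_1^3$, $x_1^2x_j$ and $x_1x_jx_l$ by $\mu^{-3}$, $\mu^{-2}$ and $\mu^{-1}$ respectively, which is exactly the formula appearing in the paper's proof.

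This is not cosmetic, because finiteness of the possible equivalence classes of $F_{X'}$ now requires $\mu=\lambda K_{X'}\cdot C'$ to range over a finite set, i.e.\ one must bound $K_{X'}\cdot C'$ for the flipped curve. The bound $-1<K_X\cdot C<0$ for flipping curves is classical \cite{b}, but the corresponding bound for flipped curves is Lemma~\ref{kcb}, a genuinely geometric input (proved via the Chen--Hacon factorisation) that your argument never invokes. Your bound on $\delta_0$ via Proposition~\ref{bk3} and Lemma~\ref{bka} survives as the bound on the new $x_1^3$ coefficient, but without controlling $\mu$ the proof is incomplete.
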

\begin{proof}
	By Lemma \ref{ch4h2} we know that the natural map $H^4(W,\Z)_{free}\rightarrow H_2(W,\Z)_{free}$ is surjective. Lemma  \ref{l_b2} implies that $b_2(W)=b_2(X)-1$. Since  $b_4(W)=b_4(X)=b_2(X)$, it follows that the above map has a kernel of rank one. Let $\tau_1\in H^4(W,\Z)_{free}$ be the generator of the kernel. Also fix a generating set $\zeta_2$, ..., $\zeta_k$ of $H_2(W,\Z)_{free}$. For each $\zeta_i$, we fix a lifting $\tau_i\in H^4(W,\Z)_{free}$ of $\zeta_i$. Let $\xi_i\in H_2(X,\Z)_{free}$ be the cycle corresponding to the pull-back of $\tau_i$ in $H^4(X,\Z)_{free}$ for $i=1$, ..., $k$. Then $\xi_1$ is supported on the flipping locus and the push-forward of $\xi_i$ is $\zeta_i$ for $i>1$. Since the natural map
	\[ H^4(W,\Z)_{free}\cong H^4(X,\Z)_{free}\rightarrow H_2(X,\Z)_{free}\] is surjective by Lemma \ref{h4h2}, we know that $\xi_1$, ..., $\xi_k$ generates $H_2(X,\Z)_{free}$. We know that $\eta_i(\xi_1)=0$ for $i>1$. By the surjectivity part of the universal coefficient theorem, there exists an element $\eta\in H^2(X,\Z)_{free}$ such that $\eta(\xi_1)=1$. This means that $\eta_1(\xi_1)=1$. For all $i>1$, by replacing $\xi_i$ by $\xi_i-\eta_1(\xi_i)\xi_1$, we may assume that $\eta_1(\xi_i)=0$.\par
	We can choose $\xi'_1$, ..., $\xi'_k\in H_2(X',\Z)_{free}$ such that $\xi_i$ and $\xi'_i$ corresponds to the same pull-back of an element in $H^4(W,\Z)_{free}$. Then $\xi'_1$ is supported on the flipped locus, the push-forward of $\xi'_i$ is exactly $\zeta_i$ for $i>1$, and $\xi'_1$, ..., $\xi'_k$  generate $H_2(X',\Z)_{free}$. We choose $\eta'_1\in H^2(X',\Z)_{free}$ be the element such that $\eta'_1(\xi'_1)=1$ and $\eta'_1(\xi'_i)=0$ for $i>1$. For each $2\leq i\leq n$, let $\eta'_i\in H^2(X',\Z)_{free}$ be the element such that $\eta_i$ and $\eta'_i$ correspond to the same pull-back of an element in $H^2(W,\Z)_{free}$. Then $\eta'_1,\dots,\eta'_n$ form a basis of $H^2(X',\Z)_{free}$.\par
	Let $\sigma\coloneqq \eta_1$. Since $b_2(X)=b_2(W)+1$ and $K_X$ is ample over $W$, we can write $\sigma=\lambda K_X+\tau$ where $\lambda\in \mathbb Q$ and  $\tau$ is the pull-back of some element in $H^2(W,\Q)$. Let $\tau'\in H^2(X',\Z)_{free}$ be the corresponding element of $\tau$ and define $\sigma'\coloneqq \lambda K_{X'}+\tau'$. Then by the projection formula it follows that $\sigma^j\cdot\nu={\sigma'}^j\cdot\nu'$ if $\nu$ and $\nu'$ corresponds to the same pull-back of an element in $H^{6-2j}(W,\Z)$, for $j=1$, $2$. Thus, $\sigma'(\xi'_1)\neq 0$ and $\sigma'(\xi'_i)=0$ for $i>1$ and, in particular, $\sigma'=\mu \eta'_1$ where $\mu=\frac{1}{\lambda}K_{X'}\cdot C'$.\par
	Now one can see that $\eta_1^3-\mu^3{\eta'}_1^3=\lambda^3(K_{X'}^3-K_X^3)$, $\eta_1^2\cdot\eta_i=\mu^2{\eta_1'}^2\cdot\eta'_i$ for $i>1$, and $\eta_1\cdot\eta_i\cdot\eta_j=\mu\eta_1'\cdot\eta_i'\cdot\eta_j'$. Therefore if we write 
	\[
	F_X(x_1,...,x_k)=ax_1^3+x^2_1B(x_2,...,x_k)+x_1C(x_2,...,x_k)+D(x_2,...,x_k)
	\]
	for some homogeneous polynomials $B,C,D\in \Z[x_1,\dots,x_k]$ of degree $1,2$ and $3$ respectively, then 
	\begin{align*}
	F_{X'}(x_1,...,x_k)=\frac{1}{\mu^3}(a+\lambda^3&(K_{X'}^3-K_X^3))x_1^3+\frac{1}{\mu^2}x_1^2B(x_2,...,x_k)\\
		&+\frac{1}{\mu}x_1C(x_2,...,x_k)+D(x_2,...,x_k).
	\end{align*}
	Let $r$ and $r'$ be the Cartier index of $K_X$ and $K_{X'}$ respectively. Then,  by Lemma \ref{bka} and Proposition \ref{bk3},  we have that  $r, r'$ and $r^3{r'}^3(K_{X'}^3-K_X^3)$ are integers which are bounded by a constant depending only on $n$. It follows that $K_{X'}^3-K_X^3$ admits only finitely many possible values. Moreover, we know that $\lambda=\frac{1}{K_X\cdot C}$. Since $K_X\cdot C\in \frac{1}{r}\Z\cap(-1,0)$ by \cite[Theorem 0]{b}, we have that $\lambda$ admits only finitely many possible values. Finally $\mu=\frac{1}{\lambda}K_{X'}\cdot C'$. By Lemma \ref{kcb} we know that $K_{X'}\cdot C'$ is bounded. Since $r'K_{X'}\cdot C'$ is an integer, it follows that $\mu$ also admits only finitely possible values. Thus, the proposition follows.
\end{proof}

\begin{proof} (Proof of Theorem \ref{thm2})
	By Lemma \ref{depr} we know that $dep(X_i)<\rho(X)\leq b_2(X)$. The assumption (2) of Proposition \ref{cbf} holds because of the fact that $H_1(exc(X_i\rightarrow W_i),\Z)=0$ (cf. proof of Lemma \ref{ch4h2}) and Lemma \ref{hbasis}. Now Proposition \ref{cbf} implies our theorem.
\end{proof}

\end{document}